\def\cal{\mathcal}
\def\Bbb{\mathbb}
\def\G{\Gamma}
\def\r{\rangle}
\def\l{\langle}
\newtheorem{prop}{Proposition}[section]
\newtheorem{thm}{Theorem}[section]
\newtheorem*{ack}{Acknowledgement}
\newtheorem{lemma}{Lemma}[section]
\newtheorem{cor}{Corollary}[section]
\newtheorem{defn}{Definition}[section]
\newtheorem{rem}{Remark}[section]
\numberwithin{equation}{section}
\begin{document}
\title[A certain structure of Artin groups]{A certain structure of Artin groups and the isomorphism 
conjecture}
\author[S.K. Roushon]{S.K. Roushon}
\address{School of Mathematics\\
Tata Institute\\
Homi Bhabha Road\\
Mumbai 400005, India}
\email{roushon@math.tifr.res.in} 
\urladdr{http://www.math.tifr.res.in/\~\ roushon/}
\thanks{May 06, 2020. To appear in the {\it Canadian Journal of Mathematics}}
\begin{abstract}
  We observe an inductive structure in a large class of
  Artin groups of finite real, complex and affine types and 
exploit this information to  
deduce the Farrell-Jones isomorphism conjecture for these groups.\end{abstract}

\keywords{Artin groups, isomorphism conjecture, Whitehead group, reduced projective 
class group, surgery obstruction groups, Waldhausen $A$-theory.}

\subjclass[2010]{Primary: 19B99,19G24,20F36,57R67 Secondary: 57N37.}
\maketitle


\section{Introduction}
We prove the   
isomorphism conjecture of Farrell and Jones (\cite{FJ}, \cite{lueck}) in
the $K$-, $L$- and $A$-theories, for a large class of Artin 
groups of finite real, complex and affine types. The classical
braid group is an example of a finite type real Artin group
(type $A_n$), and we considered this group in \cite{FR} and \cite{R5} in the 
pseudoisotopy case of the conjecture.

The classical braid group case as in \cite{FR} and \cite{R5}, 
used the crucial idea that,  
the geometry of a certain class of $3$-manifolds is involved in the building of the group. 
In the situation of the Artin groups considered in this paper,  
we find a similar structure. We exploit this information to prove the conjecture. More precisely, 
some of the Artin groups are realized as a subgroup of the orbifold 
fundamental group of the configuration space of unordered $n$-tuples of distinct  
points, on some $2$-dimensional orbifolds. In the classical braid group case, 
the complex plane played the role of this $2$-dimensional orbifold.

The isomorphism conjecture is an important conjecture  
in Geometry and Topology. Much work has been done in this area in recent times 
(e.g. \cite{BB}, \cite{BL}, \cite{Rou0}, \cite{Rou01}, \cite{R5}). 
The motivations behind the isomorphism conjecture are 
the Borel and the Novikov conjectures, which claim that 
any two closed aspherical homotopy equivalent manifolds are 
homeomorphic, and the homotopy invariance of higher signatures
of manifolds, respectively. Furthermore, the
isomorphism conjecture for a group provides a 
better understanding of the $K$- and $L$-theories of the group.
Also it is well-known that the isomorphism conjecture for a group implies vanishing 
of the lower $K$-theory ($Wh(-), \tilde K_0({\Bbb Z}[-])$ and
$K_{-i}({\Bbb Z}[-])$ for $i\geq 1$) 
of any torsion free subgroup of the group. In fact, it is still an open conjecture 
that this should be the case for all torsion free groups. A consequence
of the isomorphism conjecture is another conjecture by Hsiang, which says 
that $K_{-i}({\Bbb Z}[-])=0$ for all $i\geq 2$ and for all groups (\cite{Hsi}).

There are two classical 
exact sequences in $K$- and $L$-theories, involving the following two
assembly maps, where $R$ is a regular ring
in $A_K$, and is a ring with involution in $A_L$. This 
summarizes the history of the Borel and the Novikov conjectures.

$$A_K:H_*(B\G, {\Bbb K}(R))\to K_*(R[\G]).$$
$$A_L:H_*(B\G, {\Bbb L}(R))\to L_*(R[\G]).$$

In the case of torsion free groups, the isomorphism conjecture says
that the above two maps are isomorphisms. See [\cite{FJ}, \S1.6.1] or
[\cite{lueck}, Remark 20.12.3].

Following Farrell-Jones and Farrell-Hsiang, in all of recent works on the 
isomorphism conjecture, geometric input on the 
group plays a significant role. Here also we follow a similar path to prove the 
conjecture, but we need to use an inductive structure in the groups, where
the building blocks carry certain (non-positively curved) geometry. The existence of any global
geometric structure on Artin groups is not yet known. A problem is 
stated in \cite{RC}, asking if these groups should be $CAT(0)$, and for
$CAT(0)$ groups the isomorphism conjecture is known (\cite{BL}, \cite{Weg}).

We prove the following theorem.

\begin{thm} \label{fic-braid} Let $\G$ be an Artin 
group of type $A_n$, $B_n (=C_n)$, $D_n$, $F_4$, $G_2$, $I_2(p)$, $\tilde A_n$,  
$\tilde B_n$, $\tilde C_n$ or $G(de,e,r)$ ($d,r\geq 2$). Then the isomorphism 
conjecture in $K$-, $L$- and $A$-theories with coefficients and finite wreath product is true for any 
subgroup of $\G$. That is, the isomorphism conjecture with coefficients is true for 
$H\wr G$, for any subgroup $H$ of $\G$, and for any finite group $G$.\end{thm}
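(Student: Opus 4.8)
The plan is to establish the Farrell-Jones isomorphism conjecture (FIC) for these Artin groups by setting up an induction on the rank $n$ (the number of strands / nodes in the Coxeter diagram), where the base cases are low-rank or dihedral-type Artin groups and the inductive step exploits the geometric "building block" structure advertised in the introduction. The key reduction is the observation that many of these Artin groups sit inside the orbifold fundamental group of a configuration space $C_n(S)$ of $n$ unordered distinct points on a suitable $2$-orbifold $S$ (the plane, a punctured plane, a disk with cone points, etc.), and that such configuration-space orbifold fundamental groups fit into a short exact sequence
$$1\to \pi_1^{orb}(S\setminus\{n-1\ \text{points}\})\to \pi_1^{orb}(C_n(S))\to \pi_1^{orb}(C_{n-1}(S))\to 1,$$
obtained from the Fadell-Neuwirth fibration of the $n$-th point over a configuration of the first $n-1$. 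The fiber group here is a surface (orbifold) group, hence a hyperbolic or $CAT(0)$ group, for which FIC with wreath products and coefficients is already known; the quotient is, inductively, a group for which FIC holds.

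First I would invoke the standard inheritance properties of the FIC (with coefficients and finite wreath products): it passes to subgroups, it is closed under finite index overgroups, it satisfies the fibering/transitivity principle (if $1\to K\to G\to Q\to 1$ with FIC for $Q$ and for every preimage of a virtually cyclic subgroup of $Q$, then FIC for $G$), and it holds for hyperbolic groups, $CAT(0)$ groups, and virtually poly-$\mathbb Z$ groups — these are the results one is allowed to cite from the literature (\cite{BL}, \cite{Weg}, \cite{lueck}). Next, for each Weyl/affine/complex-reflection type in the list I would identify the relevant $2$-orbifold $S$ and realize the Artin group $\G$ (up to finite index, or as an explicit subgroup, which is all that is needed since FIC for a finite-index overgroup plus passage to subgroups covers $\G$ and all its subgroups) inside $\pi_1^{orb}(C_n(S))$. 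For the affine and $G(de,e,r)$ cases the orbifold $S$ carries cone points or punctures, which is exactly why orbifold fundamental groups rather than ordinary ones are forced on us. Then the Fadell-Neuwirth sequence gives the inductive step: the fiber is a $2$-orbifold group (FIC known), and to apply the fibering principle I must check FIC for the preimages in $\pi_1^{orb}(C_n(S))$ of the virtually cyclic subgroups of $\pi_1^{orb}(C_{n-1}(S))$ — by induction these preimages are extensions of a surface group by a virtually cyclic group, hence (being virtually the fundamental group of a surface bundle over a circle or a point) again groups of a type for which FIC is available, e.g. $CAT(0)$ or hyperbolic-by-cyclic handled by the same transitivity argument one more time.

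The base of the induction consists of the rank-$\le 2$ cases: $A_1$ (infinite cyclic), $I_2(p)$ and $G_2$ (these are one-relator Artin groups on two generators, which are known to be $CAT(0)$, indeed they act on $CAT(0)$ spaces / are $\mathbb Z$-by-free or fit a Baumslag-Solitar-free pattern), and the small-rank affine diagrams $\tilde A_1$; for all of these FIC with wreath products is either already in the literature or follows from $CAT(0)$-ness. The exceptional finite types $F_4$, and the $G(de,e,r)$ family, I would handle either by the same configuration-space model or by exhibiting a finite-index subgroup that is an iterated extension built from surface groups and free groups, again feeding the transitivity principle. Finally, the "for any subgroup $H$ of $\G$ and any finite group $G$, FIC holds for $H\wr G$" conclusion comes for free because the whole argument is carried out at the level of the FIC-with-coefficients-and-finite-wreath-products, a property that is by design inherited by subgroups and stable under the operations used.

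The main obstacle I anticipate is not the formal induction — that is routine once the inheritance toolkit is in place — but the geometric identification step: showing that each Artin group on the list genuinely embeds (with finite index in something, or as an explicit subgroup) into the orbifold fundamental group of an $n$-point configuration space on the right $2$-orbifold, with the Fadell-Neuwirth projection restricting correctly to $\G$. For the classical braid group (type $A_n$) this is the plane and is classical; for $B_n=C_n$ it is the once-punctured plane; the affine types $\tilde A_n,\tilde B_n,\tilde C_n$ and the complex types $G(de,e,r)$ require choosing orbifolds with the correct cone-point/puncture data and verifying that the known presentations of these Artin groups match the orbifold-braid relations — this matching, and the bookkeeping of which subgroup of the orbifold braid group one lands in, is where the real work lies. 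A secondary subtlety is ensuring that the preimages of virtually cyclic subgroups stay within a class for which FIC is known at every stage of the induction; I expect this to go through because a virtually-cyclic-by-(surface group) extension is virtually a surface-bundle-over-$S^1$ group, hence $CAT(0)$ (or handled by one further application of transitivity), but the details of the affine and complex cases deserve care.
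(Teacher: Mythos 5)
Your overall strategy --- induction via Fadell--Neuwirth, the fibering/transitivity principle, and the observation that preimages of infinite cyclic subgroups of the base are fundamental groups of surface bundles over the circle --- is exactly the paper's strategy for the types whose configuration space lives on an honest $2$-manifold: $A_n$ over ${\Bbb C}$, $B_n$ and $\tilde A_n$ over the punctured plane, $\tilde C_n$ over the twice-punctured plane, and $G(de,e,r)$ via the embedding ${\cal A}_{G(de,e,r)}\hookrightarrow{\cal A}_{B_r}$. The paper packages the inheritance toolkit as membership in a class $\cal C$ and quotes a single theorem of L\"uck, which is cosmetic. One correction: those preimages are best described as $3$-manifold groups (mapping tori of punctured-surface homeomorphisms need not be $CAT(0)$, but FICwF is known for all $3$-manifold groups), and this is what the paper uses.

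The genuine gap is in the cases $D_n$, $F_4$ and $\tilde B_n$, which you dispose of with ``either by the same configuration-space model or by exhibiting a finite-index subgroup that is an iterated extension built from surface groups and free groups.'' Neither option is available as stated. The configuration-space models for $D_n$ and $\tilde B_n$ involve orbifolds with cone points (${\Bbb C}(0,1;2)$ and ${\Bbb C}(1,1;2)$), and Fadell--Neuwirth is a theorem about manifolds: for orbifold configuration spaces with cone points one does not know that the projection yields the needed exact sequences with controlled preimages, nor that the relevant universal covers are contractible --- the paper leaves the entirely analogous $\tilde D_n$ case (orbifold ${\Bbb C}(0,2;2,2)$) as an open problem for precisely this reason, so an argument that ran Fadell--Neuwirth directly on cone-point orbifolds would prove too much. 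Moreover $F_4$ has no configuration-space model in this framework at all. What the paper actually does: for $D_n$ and $F_4$ it uses Brieskorn's explicit locally trivial fibrations of the pure Artin hyperplane complements over $Z_{n-1}=PB_{n-1}({\Bbb C}^*)$ (e.g.\ $z_i=y_n^2-y_i^2$ for $D_n$), whose fibers are $2$-manifolds, and then runs the mapping-torus argument on these; for $\tilde B_n$ it proves a new embedding $\pi_1^{orb}(B_n({\Bbb C}(1,1;2)))\hookrightarrow\pi_1^{orb}(B_{n+1}({\Bbb C}(0,1;2)))$ (filling the puncture with an extra string, $P\mapsto\overline A_n^{\,2}$ on generators) and thereby reduces $\tilde B_n$ to the index-$2$ overgroup of ${\cal A}_{D_{n+1}}$. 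These two inputs are the real content beyond the formal induction, and they are missing from your proposal. (Minor point: $I_2(p)$ and $G_2$ are handled in the paper simply because a hyperplane complement in ${\Bbb C}^2$ deformation retracts to a $3$-manifold; no $CAT(0)$ or one-relator input is needed.)
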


\begin{proof} In Section 3 we introduce a class $\cal C$ (Definition \ref{C}) of groups defined
  inductively, using suitable group extensions, from fundamental groups of manifold of
  dimension $\leq 3$. Then in Theorems \ref{orbi-poly-free} and \ref{d-b} we show that
  the groups considered in Theorem \ref{fic-braid} belong to $\cal C$. Finally, in 
  Theorem \ref{ficwf} we observe that the groups in $\cal C$ satisfy the Farrell-Jones
  isomorphism conjecture with coefficients and finite wreath product.\end{proof}

Since the groups considered in the theorem are torsion free,
a consequence of the theorem is that, the two assembly maps above are
isomorphisms for the subgroups of any of these groups.

 Let $\Gamma$ be a finite type 
 pure Artin group appearing in Theorem \ref{fic-braid}.
 Then as an application, in Theorem \ref{surgery}, we 
extend our earlier computation of the surgery groups of the
classical pure Artin braid groups (\cite{R5}) to $\Gamma$.

Below we mention a well known 
consequence of the isomorphism of the $K$-theory 
assembly map. See [\cite{FJ}, \S 1.6.1].

\begin{cor} \label{lkt} The Whitehead group $Wh(-)$, the reduced projective class 
group $\tilde K_0({\Bbb Z}[-])$ and the negative $K$-groups
$K_{-i}({\Bbb Z}[-])$, for $i\geq 1$, 
vanish for any subgroup of the groups 
considered in Theorem \ref{fic-braid}.\end{cor}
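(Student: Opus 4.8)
This is a standard corollary of Theorem \ref{fic-braid}, and the plan is to deduce it from the $K$-theoretic case of the conjecture in the usual way; the argument is the one summarized in [\cite{FJ}, \S1.6.1], which one could also simply quote. First I would record the inputs. Applying Theorem \ref{fic-braid} to a subgroup $H$ itself, with trivial finite group and coefficient ring $\Bbb Z$, gives that $H$ satisfies the Farrell--Jones conjecture in $K$-theory; the wreath-product and coefficient refinements are not needed here. Moreover, as noted right after the proof of Theorem \ref{fic-braid}, the groups in the list --- hence all of their subgroups --- are torsion free. So it remains to show: a torsion-free group $H$ satisfying the $K$-theoretic Farrell--Jones conjecture has $Wh(H)=0$, $\tilde K_0(\Bbb Z[H])=0$ and $K_{-i}(\Bbb Z[H])=0$ for all $i\geq 1$.

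Next I would pass from the family of virtually cyclic subgroups, for which the conjecture is phrased, to the trivial family. Because $H$ is torsion free, each of its virtually cyclic subgroups is trivial or infinite cyclic, and $\Bbb Z[\Bbb Z]=\Bbb Z[t,t^{-1}]$ is regular, so by Bass--Heller--Swan the Nil summands that measure the difference between the two families vanish. By the transitivity principle the relative assembly map from the trivial family to the virtually cyclic one is then an equivalence on $\Bbb K(\Bbb Z)$-homology, and combining this with the conjecture shows that the classical assembly map
$$H_n(BH;\Bbb K(\Bbb Z))\longrightarrow K_n(\Bbb Z[H])$$
is an isomorphism for every integer $n$.

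Finally I would read off the vanishing from the Atiyah--Hirzebruch spectral sequence $E^2_{p,q}=H_p(BH;K_q(\Bbb Z))\Rightarrow H_{p+q}(BH;\Bbb K(\Bbb Z))$, using $K_q(\Bbb Z)=0$ for $q<0$, $K_0(\Bbb Z)=\Bbb Z$, and $K_1(\Bbb Z)=\Bbb Z/2$. For $n<0$ every contributing entry has $q<0$ and so vanishes, giving $K_{-i}(\Bbb Z[H])=0$ for $i\geq 1$. For $n=0$ only $E^2_{0,0}=\Bbb Z$ survives, and the edge homomorphism carries its generator to the class of $\Bbb Z[H]$, so $\tilde K_0(\Bbb Z[H])=0$. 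For $n=1$ the associated graded of $H_1(BH;\Bbb K(\Bbb Z))$ has layers a subquotient of $K_1(\Bbb Z)=\Bbb Z/2$ and $H_1(BH;\Bbb Z)=H/[H,H]$; by naturality of the assembly map along $\Bbb Z\hookrightarrow\Bbb Z[H]$ these layers are precisely the images of the trivial units $\{\pm1\}$ and of $H/[H,H]\subseteq GL_1(\Bbb Z[H])$, so the quotient $Wh(H)=K_1(\Bbb Z[H])/\{\pm g:g\in H\}$ is trivial. The only point needing a little care is this last matching of the spectral-sequence filtration with the subgroup of trivial units; the rest is formal, and the whole deduction is routine given Theorem \ref{fic-braid}.
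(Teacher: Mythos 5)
Your argument is correct and is exactly the standard deduction that the paper invokes by citing [\cite{FJ}, \S1.6.1] as a ``well known consequence'' of the $K$-theoretic assembly map being an isomorphism for torsion-free groups: reduce to the classical assembly map using torsion-freeness and regularity of $\Bbb Z$, then read off the vanishing of $K_{-i}$, $\tilde K_0$ and $Wh$ from the Atiyah--Hirzebruch spectral sequence. So you have simply written out in detail the proof the paper delegates to the reference; there is no substantive difference in approach.
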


The paper is organized as follows. In Section 2 
we recall some background on Artin groups and in Section 3
we state our main results. 
We describe orbifold braid groups, some work from \cite{All}
and also prove a crucial proposition in Section 4,   
which are the key inputs in this work. Section 5 contains the proofs
of Theorems \ref{orbi-poly-free} and \ref{d-b}.
The proof of Theorem \ref{surgery}, on the 
computation of the surgery groups of pure Artin groups, is   
given in Section 6.

\begin{ack}{\rm The author is grateful to the referees for their
    valuable comments and suggestions, which made some significant
    improvements in the paper.}\end{ack}

\section{Artin groups}
The Artin groups are an important class of groups, and appear  
in different areas of Mathematics.

We are interested in those Artin 
groups, which appear as 
extensions of Coxeter groups by the fundamental group  
of hyperplane arrangement complements in ${\Bbb C}^n$, for some $n$. We need 
the topology and geometry of this complement to work on the Artin groups.

The Coxeter groups are generalization of reflection 
groups, and is yet another useful class of groups (\cite{Cox}). Several Coxeter groups 
appear as Weyl groups of simple Lie algebras. In fact, all the Weyl groups are 
Coxeter groups. There are two different types of reflection groups we consider 
in this article; spherical and affine. The spherical type is generated by reflections 
on hyperplanes passing through the origin in an Euclidean space, and the affine type 
is generated by reflections on hyperplanes without the requirement that 
the hyperplanes pass through the origin.

Next we give the presentation of the Coxeter groups 
in terms of generators and relations. This way one has a better understanding 
of the corresponding Artin groups.

Let $K=\{s_1,s_2,\ldots,s_k\}$ be a finite set, and $m:K\times K\to
\{1,2,\cdots, \infty\}$ be a 
function with the property that $m(s,s)=1$, and $m(s',s)=m(s,s')\geq 2$ for $s\neq s'$. 
The {\it Coxeter group} associated to the pair $(K,m)$, is by definition the 
following group.
$${\cal W}_{(K,m)}=\langle K\ |\ (ss')^{m(s,s')}=1,\ s,s'\in S\ \text{and}\ m(s,s')<\infty\rangle.$$
$(K,m)$ is called a {\it Coxeter system}.

Associated to a Coxeter system there is a graph called {\it Coxeter diagram}. This 
graph has vertex set $K$, two vertices $s$ and $s'$ are connected by an edge if $m(s,s')\geq 3$. 
If $m(s,s')=3$, then the edge is not labeled, otherwise it gets the label $m(s.s')$.
From this diagram, the presentation of the Coxeter group can be reproduced.

A complete 
classification of finite, irreducible Coxeter groups is known (see
\cite{Cox}). Here  
irreducible means the corresponding Coxeter diagram is connected. That is, the Coxeter group becomes 
direct product of the Coxeter groups corresponding to the connected components of the 
Coxeter diagram. In this article, 
without any loss, by a Coxeter group we will always mean an
irreducible Coxeter group. Since the Artin group (see below for
definition) corresponding to a
reducible Coxeter group 
is the direct product of the Artin groups of its
irreducible factors (see Condition $(3)$ in Definition \ref{C}).
We reproduce the list of all finite Coxeter groups in Table 1. These 
are exactly the finite (spherical) reflection groups. For a general reference on  
this subject we refer the reader to \cite{Hum}.

Also there are infinite Coxeter 
groups which are affine reflection groups. 
See Table 2, which shows a list 
of only those, which we need for this paper. For a complete list see 
\cite{Hum}. In the tables the associated 
Coxeter diagrams are also shown.

The symmetric groups $S_n$ on $n$ letters and the finite dihedral group 
are examples of Coxeter groups. 
These are the Coxeter group of type $A_n$ and $I_2(p)$ respectively in the table. 
The {\it Artin group} associated to the Coxeter
group ${\cal W}_{(K, m)}$ is, by definition, 
$${\cal A}_{(K, m)}=\l K\ |\ ss'ss'\cdots = s'ss's\cdots;\ s,s'\in K\r.$$
Here the number of times the factors in $ss'ss'\cdots$ appear is 
$m(s,s')$; for example, if $m(s,s')=3$, then the relation is $ss's=s'ss'$. 
${\cal A}_{(K,m)}$ is called the Artin group of type 
${\cal W}_{(K,  m)}$. There is an obvious surjective homomorphism 
${\cal A}_{(K, m)}\to {\cal W}_{(K, m)}$. The kernel ${\cal {PA}}_{(K, m)}$ (say) of this 
homomorphism is called the associated {\it pure Artin group}. 
In the case of type $A_n$, the (pure) Artin 
group is also known as the {\it classical (pure) braid group}. We will justify the word 
`braid' in Section 4. When a  
Coxeter group is a finite or an affine reflection group, the associated Artin group is called 
of {\it finite} or {\it affine type}, respectively. 

\medskip
\centerline{\includegraphics[height=9.5cm,width=7.5cm,keepaspectratio]{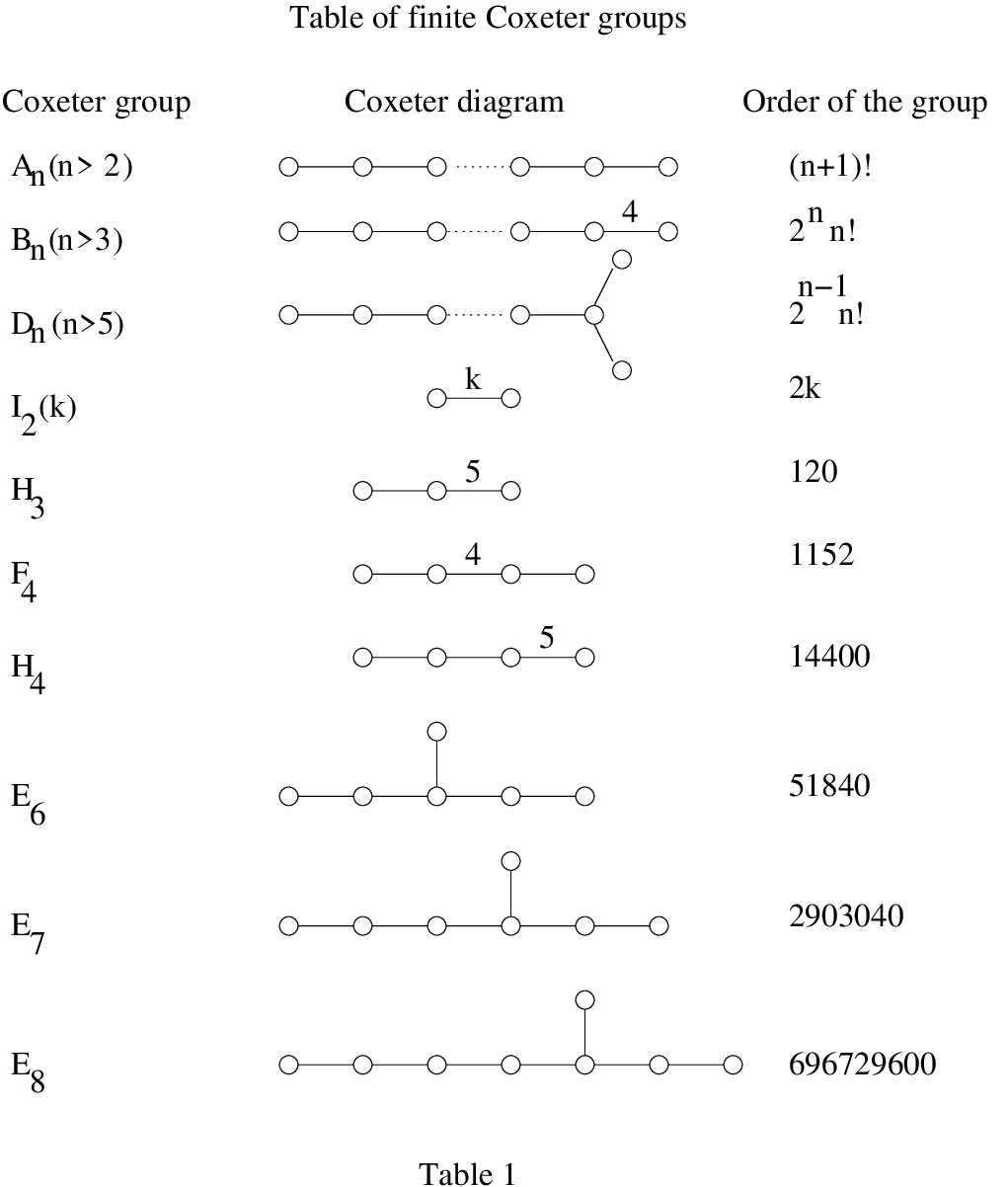}}

\medskip
\centerline{\includegraphics[height=8.5cm,width=6.5cm,keepaspectratio]{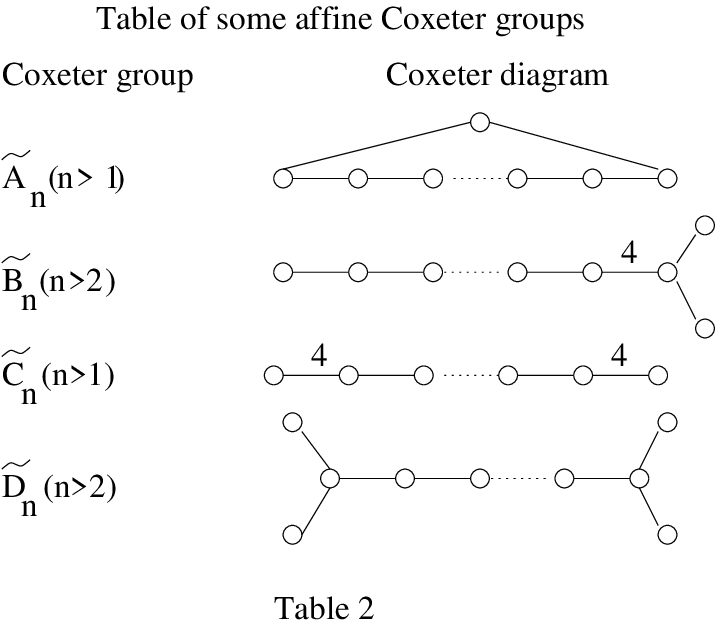}}

\medskip

Now given a finite Coxeter group (equivalently, a finite reflection
group) ${\cal W}_{(K, m)}$, and its standard faithful 
representation in $GL(n, {\Bbb R})$, for some $n$, consider the 
hyperplane arrangement in ${\Bbb R}^n$, where each hyperplane is fixed  
pointwise by an involution of the group. Next complexify ${\Bbb R}^n$ to ${\Bbb C}^n$ and 
consider the corresponding complexified hyperplanes. We call these 
complex hyperplanes in this arrangement, {\it reflecting 
hyperplanes} associated to the reflection group ${\cal W}_{(K, m)}$. Let $PA_{(K, m)}$ be the 
complement of this hyperplane arrangement in ${\Bbb C}^n$. The fundamental 
group of $PA_{(K,m)}$ is identified with the 
pure Artin group ${\cal {PA}}_{(K, m)}$, associated to the reflection group (\cite{Bri1}). 
The group ${\cal W}_{(K, m)}$ acts 
freely on this complement. The quotient space has 
fundamental group isomorphic to the Artin group, associated to 
the reflection group (\cite{Bri1}). That is, we get an 
exact sequence of the following type.

\centerline{
\xymatrix{1\ar[r]&{\cal {PA}}_{(K, m)}\ar[r]&{\cal A}_{(K, m)}\ar[r]&{\cal W}_{(K, m)}\ar[r]&1.}}


We call the space ($PA_{(K, m)}$) $PA_{(K, m)}/{\cal W}_{(K, m)}$ the {\it (pure) Artin space} of the 
Artin group of type ${\cal W}_{(K, m)}$. 

In this article, this geometric interpretation 
of the Artin groups is relevant for us. 

Next our aim is to 
discuss some generalization of this interpretation, replacing ${\Bbb C}$ by some 
$2$-dimensional orbifold, in the classical braid group case.
These generalized groups are related to some of the Artin groups. This connection  
is exploited here to help extend our earlier 
result in \cite{FR} and \cite{R5}, to the Artin groups of types  
$A_n$, $B_n (=C_n)$, $D_n$,  $\tilde A_n$,  
$\tilde B_n$ and $\tilde C_n$. Then using a different method 
we treat the cases $F_4$, $G_2$ and $I_2(p)$. Although, we will see the
core idea behind the proofs is the same; some kind of inductive
structure, 
with $3$-manifold groups as building blocks.

\begin{rem}\label{complex}{\rm Recall that, a complex reflection group is a 
subgroup of $GL(n, {\Bbb C})$ generated by complex reflections. 
A {\it complex reflection} is an element of $GL(n, {\Bbb C})$ which fixes a 
hyperplane in ${\Bbb C}^n$ pointwise. There is a classification of finite 
complex reflection groups (\cite{ST}) in the following two classes.
 
1. $G(de,e,r)$, where $d,e,r$ are positive integers. 

2. 34 exceptional groups 
denoted by $G_4,\ldots G_{37}$. 

There are associated Artin groups 
of the finite complex reflection groups.}\end{rem}

\begin{defn}\label{braid-space} {\rm For a connected topological space $X$, we define 
the {\it pure braid space} of $X$ of $n$ strings by the following.
$$PB_n(X):=X^n-\{(x_1,x_2,\ldots , x_n)\in X^n\ |\ x_i=x_j\
\text{for some}\ i\neq j\}.$$

For $n=1$ we define $PB_1(X)=X$. For $n\geq 2$, 
the symmetric group $S_n$ acts freely on the pure braid space, by
permuting coordinates. The 
quotient space $PB_n(X)/S_n$ is denoted by $B_n(X)$, and is called the {\it braid space} 
of $X$. The fundamental group of ($PB_n(X)$) $B_n(X)$ is called the
({\it pure}) {\it braid group} of $n$ strings of the topological space $X$.}\end{defn}

We will need the following well-known Fadell-Neuwirth fibration theorem.

\begin{thm}(\cite{FN}) \label{fn} Let $M$ be a connected manifold of
  dimension $\geq 2$. Then  
  the projection $M^n\to M^{n-1}$ to the first $n-1$ coordinates induces a locally trivial
  fibration $PB_n(M)\to PB_{n-1}(M)$, with fiber homeomorphic to $M- \{(n-1)\ \text{points}\}$.\end{thm}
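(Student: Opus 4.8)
The plan is to verify directly that the coordinate--forgetting map
$$p\colon PB_n(M)\longrightarrow PB_{n-1}(M),\qquad (x_1,\ldots,x_n)\longmapsto(x_1,\ldots,x_{n-1}),$$
is a locally trivial bundle with the stated fiber. First I would note that the set-theoretic fiber over a point $(x_1,\ldots,x_{n-1})$ is $\{x_n\in M\mid x_n\neq x_i,\ 1\le i\le n-1\}=M\setminus\{x_1,\ldots,x_{n-1}\}$, and that any two such complements are homeomorphic, so that ``$M$ minus $(n-1)$ points'' is unambiguous up to homeomorphism; the latter follows from the standard fact that the homeomorphism group of a connected manifold acts transitively on ordered $(n-1)$-tuples of distinct points, which one proves by joining the points by paths, covering each path by finitely many Euclidean charts, and composing compactly supported push homeomorphisms supported in those charts.

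The heart of the matter is local triviality near an arbitrary $\mathbf x=(x_1,\ldots,x_{n-1})\in PB_{n-1}(M)$. I would choose pairwise disjoint precompact coordinate charts $\psi_i\colon V_i\xrightarrow{\cong}\mathbb R^d$ with $\psi_i(x_i)=0$, where $d=\dim M$, and set $U=\prod_{i=1}^{n-1}\psi_i^{-1}(B)$ for a sufficiently small open ball $B\ni 0$ in $\mathbb R^d$; this is a neighborhood of $\mathbf x$. The key construction is a continuous family of self-homeomorphisms $\Phi_{\mathbf y}\colon M\to M$, indexed by $\mathbf y=(y_1,\ldots,y_{n-1})\in U$, with $\Phi_{\mathbf y}$ equal to the identity outside $\bigcup_i V_i$, with $\Phi_{\mathbf y}(x_i)=y_i$ for every $i$, and with $\Phi_{\mathbf x}=\mathrm{id}$. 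One builds this chart by chart using the standard push homeomorphism $h_v\colon\mathbb R^d\to\mathbb R^d$, $h_v(t)=t+\lambda(t)v$, where $\lambda\colon\mathbb R^d\to[0,1]$ is a fixed bump function identically $1$ near $0$ and compactly supported: for $v\in B$ this is a homeomorphism equal to the translation $t\mapsto t+v$ near $0$ and to the identity outside one fixed compact set, with $h_0=\mathrm{id}$, with $(v,t)\mapsto h_v(t)$ jointly continuous, and with $v\mapsto h_v^{-1}$ continuous. Transporting $h_{\psi_i(y_i)}$ to $V_i$ via $\psi_i$ and extending by the identity defines $\Phi_{\mathbf y}$; being a bijection that carries $\{x_i\}_i$ onto $\{y_i\}_i$, it restricts to a homeomorphism $M\setminus\{x_1,\ldots,x_{n-1}\}\to M\setminus\{y_1,\ldots,y_{n-1}\}$.

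Finally I would define the trivialization
$$\Theta\colon U\times\bigl(M\setminus\{x_1,\ldots,x_{n-1}\}\bigr)\longrightarrow p^{-1}(U),\qquad \Theta(\mathbf y,z)=\bigl(\mathbf y,\Phi_{\mathbf y}(z)\bigr).$$
Since $\Phi_{\mathbf y}(z)\notin\{y_1,\ldots,y_{n-1}\}$ whenever $z\notin\{x_1,\ldots,x_{n-1}\}$, the map $\Theta$ takes values in $p^{-1}(U)$ and visibly commutes with the projections to $U$; it is a bijection with inverse $(\mathbf y,w)\mapsto(\mathbf y,\Phi_{\mathbf y}^{-1}(w))$, and the joint continuity of $\Theta$ and $\Theta^{-1}$ reduces to that of $(v,t)\mapsto h_v(t)$ and of $(v,w)\mapsto h_v^{-1}(w)$ recorded above. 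As $\mathbf x$ was arbitrary, $p$ is a locally trivial fibration with fiber $M$ minus $(n-1)$ points.

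The main obstacle is the parametrized construction of the push homeomorphisms $h_v$ and the careful verification that $\Theta$ and $\Theta^{-1}$ are continuous in all variables simultaneously; the remaining steps are routine bookkeeping. Observe that the argument uses only that $M$ is locally Euclidean (for the charts and the push maps) and connected (to identify all fibers with one model), so no further geometric hypotheses on $M$ are needed.
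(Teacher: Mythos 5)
Your proof is correct; the paper itself offers no argument for this statement but simply cites Fadell--Neuwirth \cite{FN}, and your construction --- disjoint precompact charts around the $n-1$ base points, a continuously parametrized family of compactly supported push homeomorphisms $h_v(t)=t+\lambda(t)v$ moving each $x_i$ to $y_i$, and the resulting trivialization $\Theta(\mathbf y,z)=(\mathbf y,\Phi_{\mathbf y}(z))$ --- is exactly the standard argument from that reference. The points you flag as needing care (choosing $B$ small enough that $h_v$ is injective, and the joint continuity of $(v,w)\mapsto h_v^{-1}(w)$, which follows from properness of $(v,t)\mapsto(v,h_v(t))$) are genuine but routine, so nothing further is required.
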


\section{Statements of results}
In this section we state the results we referred to in the Introduction.

In \cite{FR} and \cite{R5}, we  
proved the fibered isomorphism conjecture for $H\wr F$, where $H$ is any subgroup of 
the classical braid group and $F$ is a finite group.

In this article we consider a general 
statement of the conjecture with coefficients, and we denote the {\it isomorphism 
conjecture with coefficients and finite wreath product in the $K$-,
$L$- and $A$-theories} by $FICwF$.
The finite wreath product version of the conjecture was 
introduced in \cite{Rou0}, and its general properties were proved in \cite{R1}. 
The importance of this version of the conjecture was first 
observed in \cite{FR} and \cite{Rou0}.
We do not need the exact statement of the
isomorphism conjecture, but need some already know results. Therefore, we do not state  
the conjecture and refer the reader to \cite{R1} or \cite{lueck} for more on this
subject.

Now we are in a position to state the results we prove in this paper. 

We need to define the following class of groups, which 
contains the groups defined in [\cite{Rou01}, Definition 1.2.1]. 

\begin{defn}\label{C}{\rm Let ${\cal C}$ denote the smallest class of groups 
satisfying the following conditions.

1. The fundamental group of any connected manifold of dimension $\leq 3$ belongs 
to ${\cal C}$.

2. If $H$ is a subgroup of a group $G$, then $G\in {\cal C}$ 
implies $H\in {\cal C}$. This reverse implication is also true if 
$H$ is of finite index in $G$.

3. If $G_1, G_2\in {\cal C}$ then $G_1\times G_2\in {\cal C}$.

4. If $\{G_i\}_{i\in I}$ is a directed system of groups and $G_i\in {\cal C}$ for each 
$i\in I$, then the $colim_{i\in I}G_i\in {\cal C}$. 

5. Let $1\to K\to G\to H\to 1$ be a short exact sequence of groups with 
$p:G\to H$ being the last surjective homomorphism. If $K$, $H$ and 
$p^{-1}(C)$, for any infinite cyclic subgroup $C$ of $H$, belong to 
$\cal C$ then $G$ also belongs to $\cal C$.}\end{defn}

In the following two theorems, we prove that the Artin groups considered in
this paper belong to $\cal C$.

\begin{thm}\label{orbi-poly-free} Let $S$ be an aspherical connected $2$-manifold. 
  Then the (pure) braid group of $S$ belongs to $\cal C$. Consequently, the Artin groups
  of types $A_n$, $B_n (=C_n)$, $\tilde A_n$ and $\tilde C_n$ belong to
$\cal C$.\end{thm}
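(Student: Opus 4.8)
The plan is to first establish the statement for the (pure) braid group of an aspherical connected $2$-manifold $S$, and then deduce the four families of Artin groups as a consequence by identifying each with (a subgroup of) such a braid group. For the braid group of $S$, I would argue by induction on the number of strings $n$. The base case $n=1$ gives $PB_1(S)=S$, whose fundamental group $\pi_1(S)$ is the fundamental group of a connected $2$-manifold, hence lies in $\cal C$ by Condition (1). For the inductive step, apply the Fadell--Neuwirth fibration (Theorem \ref{fn}) to the projection $PB_n(S)\to PB_{n-1}(S)$: since $S$ is aspherical of dimension $2$, so is $PB_{n-1}(S)$, and the fiber is $S$ minus $(n-1)$ points, which is again an aspherical $2$-manifold (in fact homotopy equivalent to a wedge of circles unless $S$ is closed, but in all cases a connected $2$-manifold). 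This yields a short exact sequence
\[
1\to \pi_1\bigl(S-\{(n-1)\ \text{points}\}\bigr)\to \pi_1(PB_n(S))\to \pi_1(PB_{n-1}(S))\to 1.
\]
Here $K=\pi_1(S-\{(n-1)\ \text{points}\})\in\cal C$ by Condition (1), and $H=\pi_1(PB_{n-1}(S))\in\cal C$ by the inductive hypothesis. To invoke Condition (5) I must also check that $p^{-1}(C)\in\cal C$ for every infinite cyclic $C\le H$; such a preimage is an extension of $\Bbb Z$ by the surface group $K$, i.e. the fundamental group of a surface bundle over the circle, which is the fundamental group of an aspherical closed (or bounded) $3$-manifold and hence lies in $\cal C$ by Condition (1). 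This gives $\pi_1(PB_n(S))\in\cal C$; the full braid group $\pi_1(B_n(S))$ then lies in $\cal C$ by Condition (2), since $\pi_1(PB_n(S))$ sits inside it as a finite-index (index $n!$) subgroup.

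For the ``consequently'' clause I would recall the standard identifications of these Artin groups with surface braid groups: type $A_n$ is the classical braid group, i.e. the braid group of the plane $\Bbb R^2\cong\Bbb C$; type $B_n=C_n$ is the braid group of the punctured plane $\Bbb C-\{0\}$ (equivalently a subgroup of the type-$A_{n+1}$ braid group fixing one string); type $\tilde A_n$ is a braid group of the annulus or cylinder; and type $\tilde C_n$ is a braid group of a disk with two punctures, or appears as an appropriate subgroup of one of the above surface braid groups. In each case the relevant $2$-manifold ($\Bbb C$, $\Bbb C-\{0\}$, an annulus) is aspherical and connected, so its braid group lies in $\cal C$ by the first part, and each listed Artin group is a subgroup of such a braid group, hence is in $\cal C$ by Condition (2). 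The precise surface-braid-group models for the affine types $\tilde A_n$ and $\tilde C_n$ should be quoted from the literature (e.g. the work of tom Dieck or Allcock referenced later as \cite{All}), which is presumably why Section 4 develops ``orbifold braid groups'' and recalls results from \cite{All}.

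The main obstacle I expect is the verification of Condition (5)'s hypothesis on $p^{-1}(C)$ --- i.e., checking that the preimage of each infinite cyclic subgroup is again in $\cal C$ --- because it is exactly the point where the $3$-manifold geometry enters: one needs that an extension of a $2$-manifold group by $\Bbb Z$ is a $3$-manifold group, which is true but relies on the fibering of mapping tori of surfaces and on allowing non-compact or bounded $3$-manifolds in Condition (1). A secondary difficulty, which the paper evidently addresses via the orbifold reformulation in Section 4, is that the affine-type Artin groups $\tilde A_n, \tilde C_n$ are not literally surface braid groups on the nose; identifying them as subgroups of braid groups of a surface (rather than an orbifold with cone points) requires the ``crucial proposition'' promised in Section 4, and the honest argument for those two types may well need the orbifold braid group machinery and Theorem \ref{d-b} rather than the clean inductive statement above. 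I would therefore present the surface-braid-group induction in full here and defer the affine-type identifications to the later sections where \cite{All} and the orbifold constructions are available.
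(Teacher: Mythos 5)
Your proposal is correct and follows essentially the same route as the paper: induction on the number of strings via the Fadell--Neuwirth fibration, with the Condition (5) hypothesis verified by identifying $p^{-1}(C)$ with the fundamental group of the mapping torus of the geometric monodromy (the paper isolates exactly this step as Lemma 5.1), and the four Artin types then handled by Allcock's embeddings into braid groups of $\Bbb C$ and the once- and twice-punctured plane together with Condition (2). One small caution: the blanket claim that ``an extension of a $2$-manifold group by $\Bbb Z$ is a $3$-manifold group'' is false for abstract extensions (non-geometric free-by-cyclic groups), so the argument must use, as the paper does, that the extension here comes from an honest fibration whose monodromy is realized by a homeomorphism of the fiber; also, your worry about $\tilde A_n$ and $\tilde C_n$ is unnecessary, since their Allcock models $\Bbb C(1,0)$ and $\Bbb C(2,0)$ have punctures but no cone points and are therefore genuine aspherical surfaces.
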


\begin{thm}\label{d-b} The Artin groups of types $D_n$, $F_4$, $G_2$, $I_2(p)$,
  $\tilde B_n$ and $G(de,e,r)$, $d,r\geq 2$  belong to
  $\cal C$.\end{thm}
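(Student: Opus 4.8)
The plan is to dispatch the listed types by two complementary mechanisms, in each case reducing to a group already known to lie in $\cal C$. For $D_n$, $\tilde B_n$ and $G(de,e,r)$ I would realize the Artin group as a subgroup, or a finite-index subgroup, of the braid group of an aspherical $2$-dimensional orbifold (a genuine $2$-manifold in the $G(de,e,r)$ case) and then invoke Theorem \ref{orbi-poly-free} together with Definition \ref{C}(2). For $I_2(p)$ and $G_2$ I would instead realize the Artin group directly as the fundamental group of a Seifert-fibred $3$-manifold and invoke Definition \ref{C}(1). The remaining type $F_4$ is genuinely higher-dimensional and must be handled by a separate short-exact-sequence argument based on Definition \ref{C}(5); this is where I expect the real work to be.

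\emph{The types $D_n$, $\tilde B_n$, $G(de,e,r)$.} First I would recall the braid pictures of Allcock \cite{All}, reviewed in Section 4: $A(D_n)$ and $A(\tilde B_n)$ are, up to commensurability, braid groups of suitable $2$-orbifolds obtained from a disc or an annulus by introducing one $\mathbb{Z}/2$ cone point (and, for the affine type, a puncture). For $G(de,e,r)$ with $d,r\geq 2$ one uses instead that the braid group of the full monomial group $G(de,1,r)=\mathbb{Z}/de\wr S_r$ is isomorphic to the type-$B_r$ Artin group $A(B_r)=\pi_1 B_r(\mathbb{C}^*)$, while $A(G(de,e,r))$ embeds in $A(G(de,1,r))$ with index $[G(de,1,r):G(de,e,r)]=e$. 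Now $A(B_r)\in\cal C$ by Theorem \ref{orbi-poly-free}, since $\mathbb{C}^*$ is an aspherical connected $2$-manifold, and the $2$-orbifold braid groups appearing for $D_n$ and $\tilde B_n$ lie in $\cal C$ by the orbifold analogue of the same theorem: an induction on the number of strings, using the orbifold Fadell--Neuwirth fibration established in Section 4 (cf.\ Theorem \ref{fn}). At each step, removing a string yields a short exact sequence whose kernel is the fundamental group of a punctured $2$-orbifold --- virtually a surface group, hence in $\cal C$ --- whose quotient is the braid group on fewer strings, in $\cal C$ by induction, and for which the preimage of any infinite cyclic subgroup is virtually a surface-by-$\mathbb{Z}$ group, that is, a $3$-manifold group; so Definition \ref{C}(5) applies. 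Finally Definition \ref{C}(2) puts $A(D_n)$, $A(\tilde B_n)$ and $A(G(de,e,r))$ in $\cal C$.

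\emph{The types $I_2(p)$ and $G_2$.} Since $G_2=I_2(6)$ it is enough to treat $A(I_2(p))$. I would use that this dihedral Artin group has infinite cyclic centre $Z$ with virtually free quotient $A(I_2(p))/Z$ --- isomorphic to $\mathbb{Z}\ast\mathbb{Z}/k$ when $p=2k$, and to a free product of two finite cyclic groups when $p$ is odd. Hence $A(I_2(p))$ is a central $\mathbb{Z}$-extension of the fundamental group of a $2$-orbifold with underlying surface a disc, i.e.\ the fundamental group of a Seifert-fibred $3$-manifold, and so $A(I_2(p))\in\cal C$ by Definition \ref{C}(1). Alternatively, apply Definition \ref{C}(5) to $1\to Z\to A(I_2(p))\to A(I_2(p))/Z\to 1$: the quotient is virtually free, hence in $\cal C$ by (1)--(2), and the preimage of an infinite cyclic subgroup of it is isomorphic to $\mathbb{Z}^2\in\cal C$.

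\emph{The type $F_4$, and the main obstacle.} Because $A(F_4)$ has cohomological dimension $4$, it is neither a $3$-manifold group nor the target of an obvious ``forget a string'' fibration, so neither mechanism above applies directly. The plan is to construct a short exact sequence $1\to K\to A(F_4)\to Q\to 1$, with surjection $p\colon A(F_4)\to Q$, in which $Q$ is already known to lie in $\cal C$ --- the natural candidate being the rank-two Artin quotient $A(I_2(4))=A(B_2)$ obtained by killing the two ``outer'' standard generators of the $F_4$ Coxeter diagram, which is a Seifert $3$-manifold group by the previous paragraph --- and then to check the two hypotheses of Definition \ref{C}(5): that $K=\ker p\in\cal C$, and that $p^{-1}(C)\in\cal C$ for every infinite cyclic $C\leq Q$. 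The crux, which I expect to be the hardest part of the theorem, is to identify $K$ explicitly --- equivalently, the relevant fibre in a fibration of the associated $K(\pi,1)$ spaces --- as an iterated extension built from fundamental groups of manifolds of dimension $\leq 3$, and to control the groups $p^{-1}(C)$; this is presumably the role of the crucial proposition proved in Section 4. Granting it, every group in the statement lies in $\cal C$, which is exactly the assertion of Theorem \ref{d-b}.
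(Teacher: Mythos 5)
Your treatment of $G(de,e,r)$, $I_2(p)$ and $G_2$ is essentially sound and close to the paper's: ${\cal A}_{G(de,e,r)}\hookrightarrow {\cal A}_{B_r}=\pi_1(B_r({\Bbb C}^*))$ via [CLL, Prop.\ 4.1] plus Theorem \ref{orbi-poly-free} and Condition (2); and the dihedral Artin groups are $3$-manifold groups (the paper gets this more directly by noting that any hyperplane arrangement complement in ${\Bbb C}^2$ deformation retracts onto its link in ${\Bbb S}^3$, but your Seifert-fibred description works too). The two remaining cases, however, contain genuine gaps. For $D_n$ and $\tilde B_n$ you invoke an ``orbifold analogue of Theorem \ref{orbi-poly-free}'' via an orbifold Fadell--Neuwirth induction. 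No such analogue is established in the paper, and it is precisely the open Problem stated at the end: the induction of Lemma \ref{induction} needs $\pi_2$ of the base $PB_{n-1}(S)$ to vanish, and the asphericity of configuration spaces of $2$-orbifolds with cone points is not known (even for ${\Bbb C}(0,2;2,2)$, which is why $\tilde D_n$ is excluded from the theorem). Moreover the fibres and the mapping tori in that induction would be orbifolds, not manifolds, so Condition (1) does not apply to them directly. The paper avoids this entirely: $D_n$ is handled by Brieskorn's genuine manifold fibration $PA_{D_n}\to Z_{n-1}=PB_{n-1}({\Bbb C}^*)$, $z_i=y_n^2-y_i^2$, with $2$-manifold fibre, so that Lemma \ref{induction} applies with base group in $\cal C$ by Theorem \ref{orbi-poly-free}; and $\tilde B_n$ is then obtained from the chain ${\cal A}_{\tilde B_n}\hookrightarrow \pi_1^{orb}(B_n({\Bbb C}(1,1;2)))\hookrightarrow \pi_1^{orb}(B_{n+1}({\Bbb C}(0,1;2)))$, where the second embedding is Proposition \ref{mapping-braid} (filling the puncture with a new string) and the last group contains ${\cal A}_{D_{n+1}}$ with index $2$ by Theorem \ref{AllC}, so the finite-index reverse direction of Condition (2) applies.

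The $F_4$ case is the other gap. Your proposed surjection onto ${\cal A}_{I_2(4)}$ by killing two standard generators leaves you with a kernel that is a normal closure with no evident structure, and you do not identify it or the preimages $p^{-1}(C)$; as written this is a plan, not a proof, and it is not the route the paper takes (nor is Proposition \ref{mapping-braid} used here --- that proposition serves the $\tilde B_n$ case). The actual argument is again a Brieskorn fibration: $PA_{F_4}\to Z_3$, $z_i=y_1y_2y_3y_4(y_4^2-y_i^2)$ for $i=1,2,3$, is a locally trivial fibration with $2$-manifold fibre over an aspherical base whose fundamental group lies in $\cal C$ by Theorem \ref{orbi-poly-free}, so Lemma \ref{induction} concludes. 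In short, the missing idea throughout is the use of these explicit fibrations of the pure Artin spaces over configuration spaces of ${\Bbb C}^*$, which lets one stay within honest manifold fibrations and avoid the unresolved asphericity questions for orbifold configuration spaces.
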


Theorem \ref{fic-braid} is then a consequence of Theorems \ref{orbi-poly-free}, \ref{d-b} and the 
following theorem.

\begin{thm} \label{ficwf} The FICwF is true for any group in $\cal C$.\end{thm}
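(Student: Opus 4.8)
The plan is to use the defining property of $\mathcal C$: it is the \emph{smallest} class of groups closed under operations (1)--(5) of Definition \ref{C}. So let $\mathcal D$ be the class of all groups for which $FICwF$ holds. If $\mathcal D$ is itself closed under each of the five operations, then $\mathcal C\subseteq\mathcal D$ by minimality, which is precisely the theorem. Thus the whole proof reduces to checking five inheritance statements for $FICwF$, and all of them are either already in the literature or follow quickly from standard principles.

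For condition (1) I would invoke the known validity of $FICwF$ for the fundamental group of a connected manifold of dimension $\le 3$. In dimensions $\le 2$ such a group is trivial, infinite cyclic, free, or a surface group, all of which are $CAT(0)$, so this is covered by \cite{BL}, \cite{Weg} together with the formal apparatus for the finite wreath product version developed in \cite{R1}; in dimension $3$ one uses geometrization together with the verification of $FICwF$ for $3$-manifold groups carried out in \cite{Rou01} and the references therein. Conditions (2), (3) and (4) are exactly the standard formal properties of $FICwF$ established in \cite{R1}: it is inherited by arbitrary subgroups and descends from a finite-index subgroup to the ambient group; it is closed under finite direct products; and it is closed under directed colimits of groups.

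The substantive step is condition (5). Suppose $1\to K\to G\to H\to 1$ is exact with surjection $p\colon G\to H$, where $K\in\mathcal D$, $H\in\mathcal D$, and $p^{-1}(C)\in\mathcal D$ for every infinite cyclic subgroup $C\le H$. I would first upgrade the last hypothesis to: $p^{-1}(V)\in\mathcal D$ for \emph{every} virtually cyclic $V\le H$. Indeed, if $V$ is finite then $K$ is a finite-index subgroup of $p^{-1}(V)$, so $p^{-1}(V)\in\mathcal D$ by the finite-index part of (2); if $V$ is infinite virtually cyclic, choose an infinite cyclic $C\le V$ of finite index, so that $p^{-1}(C)$ is of finite index in $p^{-1}(V)$, and again $p^{-1}(V)\in\mathcal D$ by (2). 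Now apply the Farrell--Jones transitivity principle in its version with coefficients in an arbitrary additive category and with the wreath twist $-\wr F$: since $H$ satisfies $FICwF$, i.e.\ its assembly map relative to the family of virtually cyclic subgroups is an isomorphism after applying $-\wr F$ for every finite group $F$ and with arbitrary additive coefficients, and since every preimage $p^{-1}(V)$ of a virtually cyclic $V\le H$ lies in $\mathcal D$, the transitivity principle promotes this to the same statement for $G$. Hence $G\in\mathcal D$, and $\mathcal D$ is closed under (5).

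The main obstacle is the bookkeeping in this last step: one must run the transitivity principle simultaneously keeping track of additive coefficients and of the finite wreath products, and one must reconcile its hypothesis over \emph{all} virtually cyclic subgroups of $H$ with the weaker hypothesis in Definition \ref{C}(5), which only mentions infinite cyclic subgroups --- this reconciliation is where condition (2) is used twice, once to absorb the finite subgroups and once to pass from infinite cyclic to infinite virtually cyclic preimages. Granting the formal package for $FICwF$ from \cite{R1} (heredity, finite index, products, colimits) and the coefficient version of the transitivity principle originating in \cite{FJ}, each of the five verifications is routine, so $\mathcal C\subseteq\mathcal D$ and the theorem follows.
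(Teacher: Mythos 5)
Your argument is correct and is, in substance, the standard proof of the result that the paper simply cites as a black box: the paper's entire proof is ``this is a special case of [L\"uck, Assembly maps, Theorem 20.8.6]'', which is precisely the statement that $FICwF$ holds for the smallest class of groups closed under operations of the kind (1)--(5). So your minimality argument ($\mathcal C\subseteq\mathcal D$ where $\mathcal D$ is the class of groups satisfying $FICwF$), your use of the inheritance properties for (2)--(4), and your handling of (5) via the transitivity principle --- in particular the two-step upgrade from infinite cyclic to arbitrary virtually cyclic subgroups of $H$ using the finite-index part of (2) --- reconstruct exactly what the citation encapsulates. Two small points of care if you were to write this out in full: the $FICwF$ of this paper includes Waldhausen's $A$-theory, so for conditions (1)--(5) you need the $A$-theoretic versions of the inheritance properties and of the Farrell--Jones conjecture for $CAT(0)$ and $3$-manifold groups, which are supplied by Enkelmann--L\"uck--Pieper--Ullmann--Winges (cited in the paper as \cite{ELPUW}) rather than by \cite{BL}, \cite{Weg}, \cite{FJ} alone; and for non-compact $3$-manifolds one should note that the fundamental group is a directed colimit of fundamental groups of compact pieces, so condition (4) (which you have already verified for $\mathcal D$) absorbs that case. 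Neither point is a gap in your logic; they are citation adjustments. The trade-off is clear: the paper's one-line citation is economical but opaque, while your version makes visible exactly where each of the five closure conditions in Definition \ref{C} is consumed, which is useful for a reader wondering why condition (5) is phrased with infinite cyclic rather than virtually cyclic subgroups.
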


\begin{proof} This is a special case of [\cite{lueck}, Theorem 20.8.6].\end{proof}

\begin{rem}{\rm It will be interesting to know if all the Artin groups belong to
    $\cal C$.}\end{rem}

  Finally, we state a corollary regarding an explicit computation 
of the surgery groups $L_*({\Bbb Z}[-])$ of the pure Artin groups of finite type. This comes out 
of the isomorphisms of the two assembly maps $A_K$ and $A_L$, for the
finite type Artin groups, stated in 
Theorem \ref{fic-braid}. The calculation was done 
in \cite{R5} for the classical pure braid group case. Together with the 
isomorphisms of $A_K$ and $A_L$, the proof 
basically used the homotopy type of the suspension of the pure braid space and  
the knowledge of the surgery groups of the trivial group. 

\begin{thm}\label{surgery} The surgery groups of the finite type pure Artin groups 
take the following form.
$$
L_i({\cal {PA}})\ =\ \begin{cases}{\Bbb Z} & \text{if}\ i=4k,\\
{\Bbb Z}^N & \text{if}\ i=4k+1,\\
{\Bbb Z}_2 & \text{if}\ i=4k+2,\\
{\Bbb Z}_2^N & \text{if}\ i=4k+3.
\end{cases}$$
Here $N$ is the number of reflecting hyperplanes associated to the finite Coxeter group, 
as given in the following table.

\centerline{
\begin{tabular}{|l|l|}
\hline
{\bf ${\cal {PA}}$\ =\ pure Artin group of type} & $N$\\
\hline \hline
$A_{n-1}$&$\frac{n(n-1)}{2}$\\
\hline
$B_n (=C_n)$ & $n^2$\\
\hline
$D_n$& $n(n-1)$\\ 
\hline
$F_4$& $24$\\
\hline
$I_2(p)$& $p$\\
\hline
$G_2$& $6$\\
\hline
\end{tabular}}
\end{thm}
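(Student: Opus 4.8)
The plan is to combine the isomorphism of the two assembly maps $A_K$ and $A_L$ for a finite type pure Artin group ${\cal {PA}}$ (which is available from Theorem \ref{fic-braid}, since ${\cal {PA}}$ is a subgroup of the corresponding finite type Artin group $\G$) with the known homotopy type of the pure Artin space, exactly as was done in \cite{R5} for the classical case ${\cal {PA}}$ of type $A_{n-1}$. First I would record that ${\cal {PA}}$ is the fundamental group of the hyperplane arrangement complement $PA_{(K,m)}\subset {\Bbb C}^n$, and that this space is aspherical (it is a $K({\cal {PA}},1)$) by the work on finite type arrangements; hence $B{\cal {PA}}\simeq PA_{(K,m)}$. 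Since $PA_{(K,m)}$ is an open complex manifold, it has the homotopy type of a finite CW complex, and by the same computation used in \cite{R5} its suspension splits as a wedge of spheres, the number of top-dimensional ($n$-)spheres being governed by the combinatorics of the arrangement; in fact, by the Orlik--Solomon description of $H^*(PA_{(K,m)};{\Bbb Z})$, the relevant invariant that survives after stabilization is the rank of $H^1$, which equals the number $N$ of reflecting hyperplanes. This is where the table enters: $N$ is read off as $\tfrac{n(n-1)}{2}$ for $A_{n-1}$, $n^2$ for $B_n=C_n$, $n(n-1)$ for $D_n$, and $24,\,p,\,6$ for $F_4,\,I_2(p),\,G_2$ respectively, these being the cardinalities of the corresponding root systems' positive roots (equivalently, the number of reflections in the Coxeter group).

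Next I would feed this into the assembly map. Because $A_L$ is an isomorphism and $R={\Bbb Z}$, one has $L_i({\Bbb Z}[{\cal {PA}}])\cong H_i(B{\cal {PA}};{\Bbb L}({\Bbb Z}))$. Using the stable splitting of $\Sigma B{\cal {PA}}$ together with the fact that generalized homology is a stable invariant, the right-hand side decomposes as a direct sum of shifted copies of $L_*({\Bbb Z})=L_*(\{e\})$, one copy of $L_{*}$ for the basepoint (the $0$-cell) and $N$ copies of $L_{*-1}$ coming from the $1$-dimensional part; the higher cells contribute nothing new to the answer once one organizes the computation as in \cite{R5} (the key point there being that only $H_0$ and $H_1$ of the space matter for the stated form of the answer, after a reduction that is identical in the present generality). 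Plugging in the classical $4$-periodic values $L_0({\Bbb Z})={\Bbb Z}$, $L_1({\Bbb Z})=0$, $L_2({\Bbb Z})={\Bbb Z}_2$, $L_3({\Bbb Z})=0$, the $0$-cell contributes ${\Bbb Z},0,{\Bbb Z}_2,0$ in degrees $4k,4k+1,4k+2,4k+3$, while the $N$ one-cells shift this by one and contribute $0,{\Bbb Z}^N,0,{\Bbb Z}_2^N$. Adding these gives precisely the four cases ${\Bbb Z}$, ${\Bbb Z}^N$, ${\Bbb Z}_2$, ${\Bbb Z}_2^N$ claimed.

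The main obstacle I anticipate is justifying that the higher-dimensional cells of $B{\cal {PA}}$ do not perturb the answer, i.e.\ that the homotopy type of $PA_{(K,m)}$ is controlled well enough for the $L$-homology to collapse to the contributions of $H_0$ and $H_1$; in \cite{R5} this rested on the specific fibered structure of the pure braid space (the iterated Fadell--Neuwirth fibrations, Theorem \ref{fn}) which makes the suspension split into spheres, and one must check that the analogous arrangement complements for types $B_n$, $D_n$, $F_4$, $G_2$, $I_2(p)$ admit the same kind of splitting after suspension. For the infinite families $A,B,D$ this follows from the well-known iterated fibration structure of the corresponding arrangement complements (they are again configuration-type spaces, as will be exploited in Section 5), and for the three sporadic low-rank cases $F_4$, $G_2$, $I_2(p)$ the spaces are small enough that one can verify the required stable splitting directly from the Orlik--Solomon algebra. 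Once that is in place, the rest is the formal assembly-map bookkeeping sketched above, carried out verbatim as in \cite{R5}.
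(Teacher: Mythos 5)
Your proposal is correct and follows essentially the same route as the paper: the isomorphism of the assembly map $A_L$ supplied by Theorem \ref{fic-braid}, the asphericity of the pure Artin space so that it models $B{\cal {PA}}$, the stable splitting of its suspension feeding into $L$-homology with the known values of $L_*({\Bbb Z})$, and the count of reflecting hyperplanes in the table. The one step you flag as the main obstacle --- ruling out contributions from higher-dimensional cells --- is dealt with in the paper not by case-by-case checks via Orlik--Solomon or iterated fibrations, but by quoting Lemma \ref{suspension} (Lemma 4.1 of \cite{R5}), which asserts in full generality that the first suspension of the complement of any arrangement of $N$ hyperplanes is homotopy equivalent to a wedge of $N$ two-spheres, whence $h_i(X)=h_i(*)\oplus h_{i-2}(*)^N$ for every generalized homology theory and the bookkeeping you describe goes through verbatim (this is Proposition \ref{mainthm2}).
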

\centerline{Table 3}

\begin{rem}{\rm 
Recall that, there are surgery groups for different kinds of 
surgery problems, and 
they appear in the literature with the notations 
$L^*_i(-)$, where $*=h,s, \l -\infty \r$ or 
$\l j\r$ for $j\leq 0$. But all of them are naturally 
isomorphic for a group  
$G$, if $Wh(G)$, $\tilde K_0({\Bbb Z}[G])$, and $K_{-i}({\Bbb Z}[G])$, for $i\geq 1$, 
vanish. This can be checked by the Rothenberg exact sequence ([\cite{Sha}, 4.13]).
$$\cdots\to L^{\l j+1\r}_i(R)\to L^{\l j\r}_i(R)\to \hat{H}^i({\Bbb Z}/2; 
\tilde K_j(R))\to L^{\l j+1\r}_{i-1}(R)\to L^{\l j\r}_{i-1}(R)\to\cdots .$$
Where $R={\Bbb Z}[G]$, $j\leq 1$, $Wh(G)=\tilde K_1(R)$, $L_i^{\l1\r}=L_i^h$, 
$L_i^{\l2\r}=L_i^s$ and 
$L_i^{\l-\infty\r}$ is the limit of $L_i^{\l j\r}$.
Therefore, because of Corollary \ref{lkt}, we use the  
simplified notation $L_i(-)$ in the above corollary.}\end{rem}

\begin{rem}\label{torsion-free}{\rm The same calculation holds for the other pure Artin groups 
corresponding to the finite type Coxeter groups and finite complex 
reflection groups (see Remark \ref{complex}) also, provided we know 
the $FICwF$ for these groups . We further need the fact that the 
Artin spaces are aspherical (\cite{Del}), which implies that the  
Artin groups are torsion free. The last fact can also be proved group 
theoretically (\cite{Gar}). The Artin groups corresponding to 
the finite complex reflection groups $G(de,e,r)$ is  
torsion free, since  
${\cal A}_{G(de,e,r)}$ is a subgroup of ${\cal A}_{B_r}$, for $d,r\geq
2$ ([\cite{CLL},
Proposition 4.1]).}\end{rem}

\section{Artin groups and orbifold braid groups}
In this section we give a short introduction to orbifold braid groups and
describe a connection with Artin groups.

Let ${\Bbb C}(k,l; q_1,q_2,\cdots ,q_l)$ denote the complex plane with $k$ punctures at
the points $p_1,p_2,\cdots, p_k\in {\Bbb C}$, $l$
distinguished points (also called {\it cone} points)
$x_1,x_2,\cdots, x_l\in {\Bbb C}-\{p_1,p_2,\cdots, p_k\}$ and an integer $q_i>1$ attached
to $x_i$ for $i=1,2,\cdots, l$. $q_i$ is called the {\it order} of the cone point $x_i$.

One can define (pure) braid group of $n$ strings of ${\Bbb C}(k,l; q_1,q_2,\cdots ,q_l)$. There are two different
ways one can do this, which give the same result. The first
one is topological and the second is pictorial way as in the classical
braid group case. In the former, one realizes ${\Bbb C}(k,l; q_1,q_2,\cdots ,q_l)$ as a $2$-dimensional orbifold and
then considers $PB_n({\Bbb C}(k,l; q_1,q_2,\cdots ,q_l))$. Note that, $PB_n({\Bbb C}(k,l; q_1,q_2,\cdots ,q_l))$ is a high dimensional
orbifold and one considers its orbifold fundamental group and follow Definition \ref{braid-space}. We call this
orbifold fundamental group as the {\it (pure) orbifold braid group} of $n$
strings of ${\Bbb C}(k,l; q_1,q_2,\cdots ,q_l)$. For basics on orbifolds see \cite{Thu}.
The later pictorial definition is relevant for us. We describe it now from \cite{All}.

Recall that any element of the classical braid group $\pi_1(B_n({\Bbb C}))$, 
can be represented by a 
braid as in the following picture. Juxtaposing one braid over another gives 
the group operation.

\medskip

\centerline{\includegraphics[height=6cm,width=8cm,keepaspectratio]{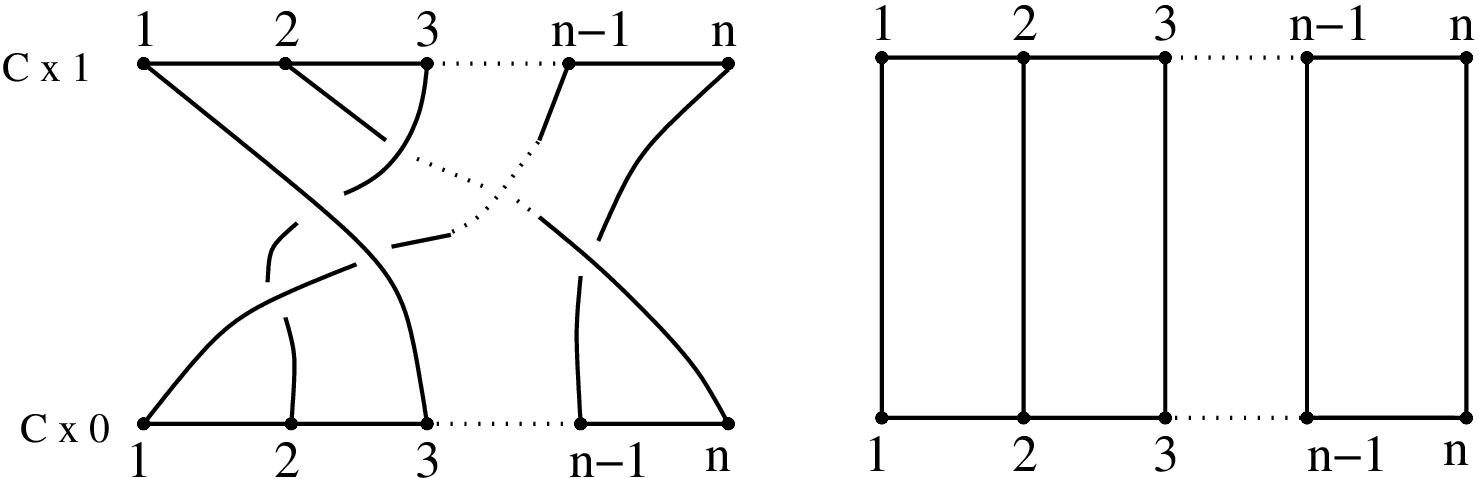}}

\centerline{Figure 1: A typical braid and the trivial (identity) braid.}

\medskip

Let $S$ be the complex plane with only one cone 
point. The underlying topological space of $S$ is nothing but the complex plane. 
Therefore, $B_n(S)$ is an orbifold when we consider the orbifold structure 
of $S$, otherwise it is the classical braid space. 

Therefore, although the fundamental group 
of the underlying topological space of $B_n(S)$ has the classical 
pictorial braid representation as above, the orbifold fundamental group 
of $B_n(S)$ needs different treatment.
We point out here a similar pictorial braid representation 
of a typical element of the orbifold fundamental
group of $B_n(S)$ from \cite{All}. The following picture 
shows the case for one cone point $x$. The thick line 
represents $x\times I$.

\medskip

\centerline{\includegraphics[height=4cm,width=7cm,keepaspectratio]{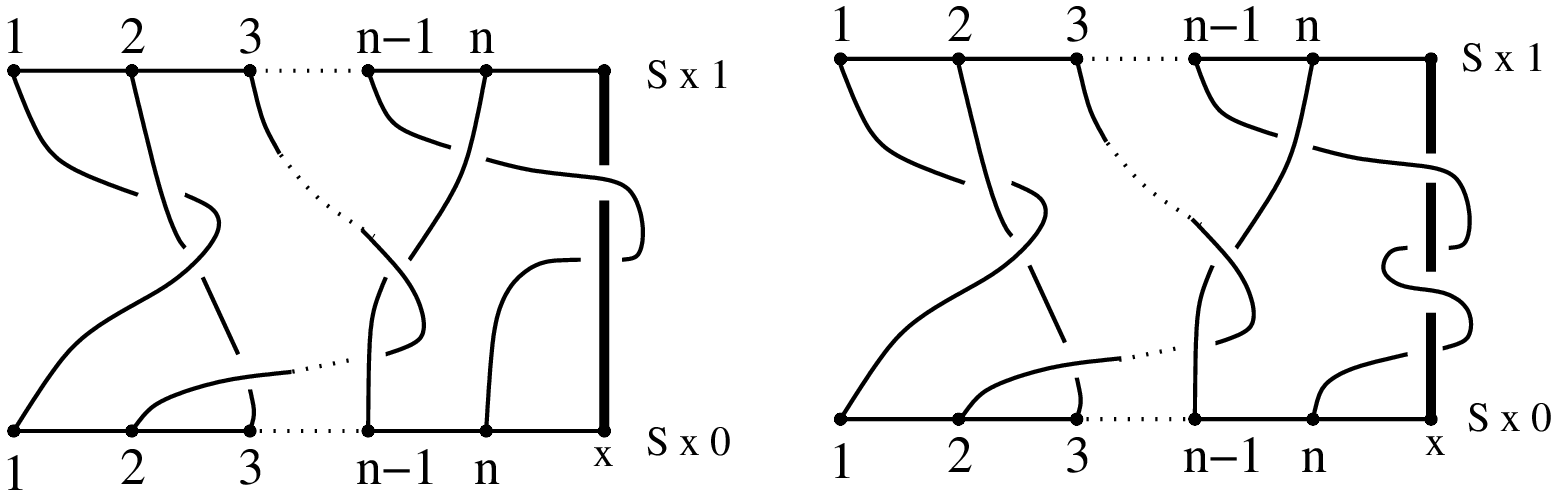}}

\centerline{Figure 2: A typical orbifold braid.}

\medskip

The group operation is again given by juxtaposing one braid onto another, and the
identity element is also obvious.
 
Note that, in Figure 2, both the braids represent the same element in $\pi_1(B_n(S))$, but 
different in $\pi_1^{orb}(B_n(S))$, depending on the order of the cone point.

In this 
orbifold situation the movement of the braids is restricted, because 
of the presence of the cone point. Therefore,  
one has to define new relations among braids, 
respecting the singular set of the orbifold. We produce one situation 
to see how this is done. 
The second picture in Figure 3 represents part of a typical element of $\pi_1^{orb}(B_n(S))$.

\medskip

\centerline{\includegraphics[height=4cm,width=7cm,keepaspectratio]{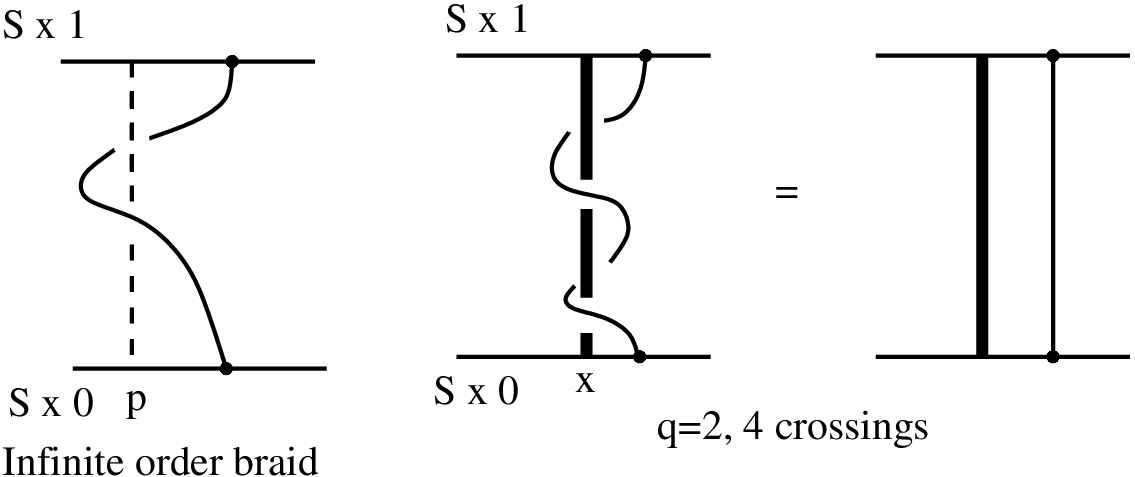}}

\centerline{Figure 3: Braid movement around a puncture and a cone point.}

\medskip

Now if a string in the braid wraps the thick line $x\times I$, $q$
times, then   
it is equal to the third picture. This is because, if a loop circles 
$q$ times around the cone point $x$, then the loop gives the 
trivial element in the orbifold fundamental group of $S$. Therefore, both braids represents 
the same element in the orbifold fundamental group of $B_n(S)$. Furthermore, if the 
string wraps the thick line, but less than $q$ number of times, then it is 
not equal to the unwrapped braid. For more details 
see \cite{All}.

When there is a puncture $p$ in the complex plane then the
braids will have to satisfy a similar property, but in this case, if
any of the string wraps $p\times I$ once then the braid will have infinite order. See
the first picture in Figure 3.

There is a useful connection between 
the orbifold braid group of the orbifold ${\Bbb C}(k,l;q_1,q_2,\cdots, q_l)$ and some of 
the Artin groups. More precisely, to associate braid type 
representation to elements of an Artin group. 

For a given Artin group one has to choose a 
suitable $2$-dimensional orbifold, as in Table 4, in
Theorem \ref{AllC}. In Remark \ref{complex}, we 
recalled a classification of finite complex reflection groups. In the
case of finite complex reflection groups of type $G(de,e,r)$, for $d,r\geq
2$, one has to consider 
the braid group of the punctured plane (annulus). See the case $B_n$ 
in Table 4 in Theorem
\ref{AllC}, and use [\cite{CLL}, Proposition 4.1].

\begin{thm}(\cite{All}) \label{AllC} Let $A$ be an Artin group, and $S$ be an orbifold as 
described in the following table. Then $A$ is a (normal) 
subgroup of the orbifold braid group $\pi_1^{orb}(B_n(S))$. The 
third column gives the quotient group $\pi_1^{orb}(B_n(S))/A$.

\medskip
\centerline{
\begin{tabular}{|l|l|l|l|}
\hline
A={\bf Artin group of type} & {\bf Orbifold $S$}&{\bf Quotient group}&$n$\\
\hline \hline
$A_{n-1}$&${\Bbb C}$&$<1>$&$n>1$\\
\hline
$B_n(=C_n)$ & ${\Bbb C}(1,0)$ &$<1>$&$n>1$\\
\hline
$D_n$&${\Bbb C}(0,1;2)$ &${\Bbb Z}/2$& $n>1$\\ 
\hline
&&&\\
$\tilde A_{n-1}$& ${\Bbb C}(1,0)$ &${\Bbb Z}$& $n>2$\\
\hline
&&&\\
$\tilde B_n$&${\Bbb C}(1,1;2)$  &${\Bbb Z}/2$& $n>2$\\
\hline
&&&\\
$\tilde C_n$&${\Bbb C}(2,0)$ &$<1>$&$n>1$\\
  \hline
  &&&\\
  $\tilde D_n$&${\Bbb C}(0,2;2,2)$&${\Bbb Z}_2\times{\Bbb Z}_2$&$n>2$\\
  \hline
\end{tabular}}
\medskip
\centerline{\rm{Table 4}}
\end{thm}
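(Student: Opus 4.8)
The plan is to prove Theorem \ref{AllC} by producing an explicit finite presentation of the orbifold braid group $\pi_1^{orb}(B_n(S))$ for each orbifold $S$ listed in Table 4, locating the standard Artin generators of $A$ inside that presentation, verifying the defining Artin relations, and finally reading off normality and the quotient group. This amounts to reconstructing the argument of \cite{All}; note that the rows of Table 4 are exactly the types for which such a braid picture exists, which is why $F_4$, $G_2$ and $I_2(p)$ must be handled separately elsewhere.

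\emph{Step 1: a presentation of the orbifold braid group.} Applying the Fadell--Neuwirth fibration (Theorem \ref{fn}) to the underlying manifold of $S$, and keeping track of the cone points (which behave like marked points contributing torsion to $\pi_1^{orb}$), the projection $PB_n(S)\to PB_{n-1}(S)$ is a fibration whose fiber is the orbifold obtained from $S$ by adding $n-1$ punctures. Iterating presents $\pi_1^{orb}(PB_n(S))$ as an iterated extension of (orbifold) surface groups, and a final extension by $S_n$ yields $\pi_1^{orb}(B_n(S))$. Reading this off --- equivalently, arguing directly from the pictures in Figures 1--3 via an orbifold van Kampen theorem --- gives generators $\sigma_1,\dots,\sigma_{n-1}$ (the classical half-twists of adjacent strands), $\rho_1,\dots,\rho_k$ (the first strand looping once around the puncture $p_i$), and $\zeta_1,\dots,\zeta_l$ (the first strand looping once around the cone point $x_j$), subject to: the braid relations among the $\sigma_i$; the torsion relations $\zeta_j^{q_j}=1$; commutation of each $\rho_i$ and each $\zeta_j$ with $\sigma_m$ for $m\geq 2$; the ``crossing'' relations describing how $\sigma_1$ conjugates the $\rho_i$ and $\zeta_j$; and one relation coming from $\pi_1$ of the underlying punctured plane.

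\emph{Step 2: locating $A$.} For each row one chooses candidate generators and checks the relations of the corresponding Coxeter diagram. When $k=l=0$ (type $A_{n-1}$) the presentation above is literally Artin's, so $A=\pi_1^{orb}(B_n(\mathbb{C}))$. For $B_n(=C_n)$ take $\{\sigma_1,\dots,\sigma_{n-1},\rho_1\}$: the only relation to verify is $\rho_1\sigma_1\rho_1\sigma_1=\sigma_1\rho_1\sigma_1\rho_1$, coming from the single crossing relation, and these elements already generate the whole group. For $\tilde C_n$ use two punctures, placing $\rho_1,\rho_2$ at the two end nodes of the diagram; again $A$ is everything. For the remaining rows the ``extra'' generators are not in $A$: for $\tilde A_{n-1}$ ($k=1$) one adjoins to $\sigma_1,\dots,\sigma_{n-1}$ a conjugate of $\sigma_{n-1}$ by a suitable word in the $\sigma_i$ and $\rho_1$, arranged cyclically so that the type-$\tilde A_{n-1}$ relations hold; for $D_n$, $\tilde B_n$, $\tilde D_n$ each order-$2$ cone point $x_j$ contributes a $\zeta_j$ with $\zeta_j^2=1$, and $A$ is generated by the $\sigma_i$ together with the ``reflected'' elements $\zeta_j\sigma_1\zeta_j^{-1}$ (and $\zeta_j\sigma_{n-1}\zeta_j^{-1}$ at the other end, for $\tilde D_n$), and, for $\tilde B_n$, also $\rho_1$; the required fork and double-bond relations again follow from the crossing relations.

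\emph{Step 3: normality and quotient.} In each case the generators of $\pi_1^{orb}(B_n(S))$ not lying in $A$ --- the total winding element, a $\rho_i$, or a $\zeta_j$ --- conjugate the chosen Artin generators back into $A$, precisely by the crossing relations, so $A\trianglelefteq\pi_1^{orb}(B_n(S))$; quotienting by $A$ then collapses the presentation to $\langle1\rangle$, $\mathbb{Z}$, $\mathbb{Z}/2$ or $\mathbb{Z}/2\times\mathbb{Z}/2$, as recorded in the third column. I expect the main obstacle to be Step 1: pinning down the crossing relations exactly and, for the cone points, correctly building the torsion relations $\zeta_j^{q_j}=1$ into $\pi_1^{orb}(B_n(S))$ --- the pictures make this plausible, but a rigorous treatment needs the orbifold Fadell--Neuwirth fibration or an orbifold van Kampen theorem together with careful bookkeeping. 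A secondary difficulty is matching the resulting presentation with the standard presentations of the affine Artin groups $\tilde A$, $\tilde B$, $\tilde C$, $\tilde D$, where one must be careful about the end nodes and the multiple bonds of the diagrams.
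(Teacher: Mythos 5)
The paper does not actually prove this statement: Theorem \ref{AllC} is imported verbatim from Allcock's paper, and the ``proof'' in the text is just the citation ``See \cite{All}.'' So the only meaningful comparison is between your outline and Allcock's actual argument, which your Steps 1--3 do resemble in spirit (explicit generators and relations for the orbifold braid groups, braid pictures, identification of the Artin generators). However, as a proof your proposal has a genuine gap at its center. In Step 2 you propose to ``choose candidate generators and check the relations of the corresponding Coxeter diagram.'' Verifying that certain elements of $\pi_1^{orb}(B_n(S))$ satisfy the Artin relations only produces a homomorphism from the Artin group ${\cal A}$ into the orbifold braid group; it says nothing about whether that homomorphism is injective, i.e.\ whether the subgroup generated by your candidates is actually isomorphic to ${\cal A}$ rather than a proper quotient of it. The content of the theorem is precisely this injectivity (``$A$ \emph{is} a subgroup''), and it is the hard part of Allcock's work: for types $A_{n-1}$ and $B_n$ it rests on the classical identifications of these Artin groups with the braid groups of the plane and the annulus, and for $D_n$, $\tilde A_{n-1}$, $\tilde B_n$, $\tilde C_n$, $\tilde D_n$ Allcock must argue separately (via semidirect-product decompositions of the orbifold braid groups and reduction to the known finite-type cases). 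Your plan never addresses this, and consequently Step 3 is also unjustified: to compute the quotient $\pi_1^{orb}(B_n(S))/A$ you need to know the image of ${\cal A}$ exactly, not merely that some elements satisfying the Artin relations generate a normal subgroup.

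A secondary, smaller issue is in Step 1: the orbifold Fadell--Neuwirth fibration you invoke is not a formal consequence of Theorem \ref{fn}, since the cone points create singular strata in $PB_n(S)$ and the fiber of the projection is an orbifold with both punctures and cone points; one must either prove an orbifold version of the fibration theorem or, as Allcock does, work directly with generators, relations, and picture moves. You flag this yourself, but it means the presentation on which Steps 2 and 3 depend is itself not established by your argument. In short: the skeleton is the right one, but the two load-bearing steps --- the presentation of $\pi_1^{orb}(B_n(S))$ and the injectivity of the map from the Artin group --- are exactly the parts that are asserted rather than proved.
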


\begin{proof}
  See \cite{All} for the proof.\end{proof}

The following proposition is crucial for this paper, which helps deducing some more
inclusions as in the previous theorem.

\begin{prop} \label{mapping-braid} There is an injective homomorphism from the orbifold braid group 
  $\pi_1^{orb}(B_n({\Bbb C}(1,1;q)))$
  to $\pi_1^{orb}(B_{n+1}({\Bbb C}(0,1;q)))$. That is, the orbifold braid group
  of $n$ strings of ${\Bbb C}(1,1;q)$ can be embedded into the
  orbifold braid group of $n+1$
  strings of ${\Bbb C}(0,1;q)$.\end{prop}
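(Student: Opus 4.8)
The plan is to exhibit the embedding geometrically, using the pictorial/orbifold braid description recalled from \cite{All}. Think of ${\Bbb C}(1,1;q)$ as the plane with one puncture $p$ and one cone point $x$ of order $q$, and ${\Bbb C}(0,1;q)$ as the plane with just the cone point $x$ of order $q$. The idea is that the puncture $p$ behaves, from the point of view of braids, like an extra string that is never allowed to move: a string wrapping $p\times I$ has infinite order, exactly as a genuine braid strand would. So I would define a map $\pi_1^{orb}(B_n({\Bbb C}(1,1;q)))\to \pi_1^{orb}(B_{n+1}({\Bbb C}(0,1;q)))$ by promoting the puncture $p$ to an $(n+1)$-st, stationary, strand sitting over a basepoint near $p$, and leaving the cone point $x$ (and its thick line $x\times I$) untouched. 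Pictorially: take an orbifold braid on $n$ strings in the once-punctured, once-coned plane and reinterpret it as an orbifold braid on $n+1$ strings in the once-coned plane, where the new strand is the constant strand at $p$.

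The steps I would carry out, in order, are: (1) Make the map precise at the level of fundamental groups. One clean way is to use the Fadell–Neuwirth style fibration: ${\Bbb C}(0,1;q)$ minus one point is orbifold-homeomorphic to ${\Bbb C}(1,1;q)$ (the removed point becomes the puncture), and the map $PB_{n+1}({\Bbb C}(0,1;q))\to PB_1({\Bbb C}(0,1;q))={\Bbb C}(0,1;q)$, recording the last strand, has fiber over a generic point exactly $PB_n$ of that punctured orbifold, i.e. $PB_n({\Bbb C}(1,1;q))$. This realizes the pure orbifold braid group of ${\Bbb C}(1,1;q)$ as (a subgroup of, or exactly) the fiber subgroup, hence the inclusion of the fiber into the total space gives the pure-level homomorphism; then pass to the full braid groups by observing that the $S_n\subset S_{n+1}$ fixing the last letter is compatible with this construction. (2) Check injectivity. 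For the pure groups this is just injectivity of $\pi_1$ of a fiber inclusion in a fibration with connected base, i.e. the relevant long exact sequence of orbifold homotopy groups; I need the base ${\Bbb C}(0,1;q)$ to have trivial $\pi_2^{orb}$, which holds since it is a good $2$-orbifold (aspherical, as a quotient of the plane). (3) Check that the resulting map on the non-pure (full) orbifold braid groups remains injective, using the commutative diagram relating the pure groups, the full groups, and the symmetric-group quotients, together with injectivity on the pure part and on the finite quotient $S_n\hookrightarrow S_{n+1}$.

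The main obstacle I expect is step (1)–(2): carefully setting up the orbifold Fadell–Neuwirth fibration and its homotopy exact sequence in the \emph{orbifold} category, and making sure the fiber really is ${\Bbb C}(1,1;q)$ on the nose rather than merely up to finite index — one must be careful that deleting a regular point of the orbifold (not the cone point) converts it to a puncture with the correct, infinite-order, local behavior. If one prefers to avoid fibration technology, the alternative is the purely pictorial argument: define the map on braid pictures as above, verify it carries each defining relation of $\pi_1^{orb}(B_n({\Bbb C}(1,1;q)))$ (the Artin-type relations, the puncture relation, and the order-$q$ cone relation) to a relation valid in $\pi_1^{orb}(B_{n+1}({\Bbb C}(0,1;q)))$, and then construct a retraction (e.g. "fill in" the puncture by forgetting the $(n+1)$-st strand and remembering its linking data) to see the map is injective; the obstacle there is bookkeeping the relations precisely as in \cite{All}.
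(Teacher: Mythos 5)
Your geometric idea is the same as the paper's: promote the puncture $p$ to an $(n+1)$-st strand. But your proposed way of making this rigorous --- the orbifold Fadell--Neuwirth fibration $PB_{n+1}({\Bbb C}(0,1;q))\to {\Bbb C}(0,1;q)$ recording the last strand, followed by the homotopy exact sequence --- has a genuine gap. That projection is \emph{not} a locally trivial fibration of orbifolds: over a regular point the fiber is $PB_n({\Bbb C}(1,1;q))$, but over the cone point $x$ the last strand sits on $x$ and the fiber (together with its local isotropy) is a different orbifold, so Theorem \ref{fn} does not apply and the exactness of the orbifold homotopy sequence is not automatic. Whether the configuration spaces of good $2$-orbifolds behave like this (e.g.\ have contractible universal covers, admit such exact sequences) is exactly the open issue the paper records in its concluding Problem, so you cannot invoke it here. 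Your fallback argument also has a soft spot: ``forgetting the $(n+1)$-st strand while remembering its linking data'' is not a homomorphism, and forgetting the strand outright kills precisely the element that the puncture generator maps to, so it is not a retraction onto the image.

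What the paper actually does is your pictorial alternative, carried out concretely: it writes down Allcock's presentations of both groups, with generators $X, A_1,\dots,A_{n-1},P$ for $\pi_1^{orb}(B_n({\Bbb C}(1,1;q)))$ and $\overline X,\overline A_1,\dots,\overline A_n$ for $\pi_1^{orb}(B_{n+1}({\Bbb C}(0,1;q)))$, and defines the map by $X\mapsto\overline X$, $A_i\mapsto\overline A_i$ and, crucially, $P\mapsto\overline A_n^{\,2}$ (the loop around the puncture becomes the full twist of the old $n$-th strand with the new strand --- not a relation-free ``constant strand'' bookkeeping device). One then checks the relations are preserved and that the map is injective. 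If you want to salvage your approach, replace the fibration step by this presentation-level verification; the symmetric-group diagram chase you describe for passing from pure to full braid groups is fine once injectivity is established at the level of the map the paper writes down.
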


\begin{proof} We give a pictorial proof (following \cite{All}) and also define the explicit map.
  
  Let $B\in \pi_1^{orb}(B_n({\Bbb C}(1,1;q)))$ as in the following figure. Here $x$ is the cone
  point and $p$ is the puncture. We send $B$ to $\overline{B}$ where the line $p\times I$ is
  sent to a new string ($n+1$ as in the figure) and hence $\overline{B}\in\pi_1^{orb}(B_{n+1}({\Bbb C}(0,1;q)))$.
  
  It is easy to see that the map $B\mapsto \overline{B}$ is an
  injective homomorphism. Here recall that the composition is
  juxtaposition of braids. In the first picture of Figure 4, $S={\Bbb C}(1,1;q)$
  and it is ${\Bbb C}(0,1;q)$ in the second one.

\medskip

\centerline{\includegraphics[height=9cm,width=11cm,keepaspectratio]{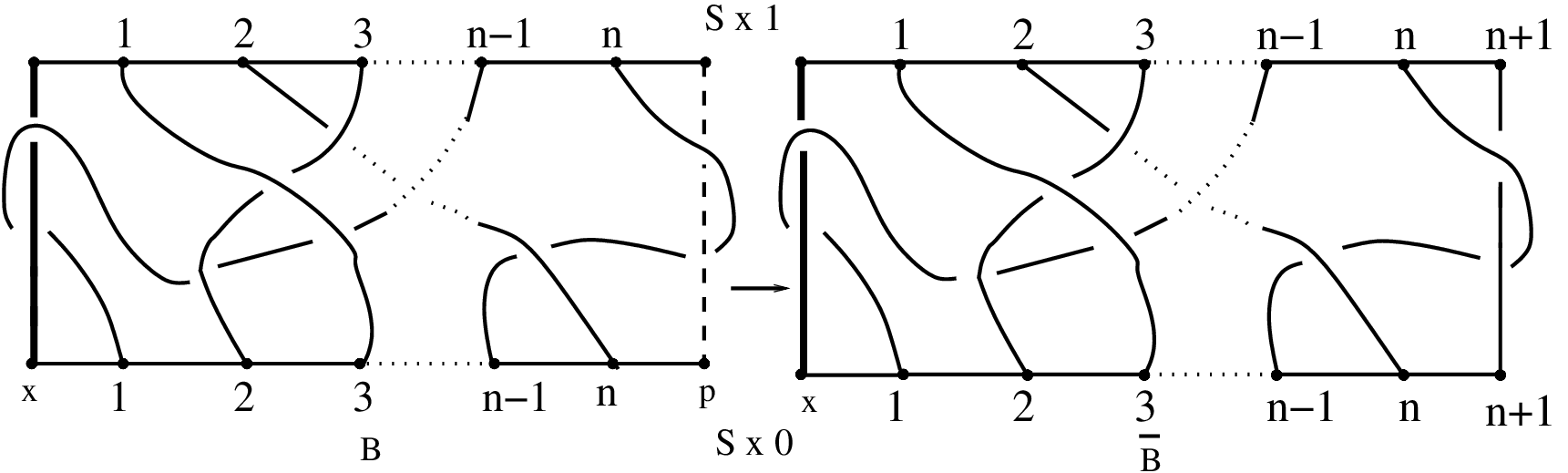}}

\centerline{Figure 4: Mapping a braid after filling a puncture with a string.}

\medskip

The group $\pi_1^{orb}(B_n({\Bbb C}(1,1;q)))$ has the following generators,  
$$X,A_1,\cdots, A_{n-1}, P.$$

\medskip

\centerline{\includegraphics[height=11cm,width=11cm,keepaspectratio]{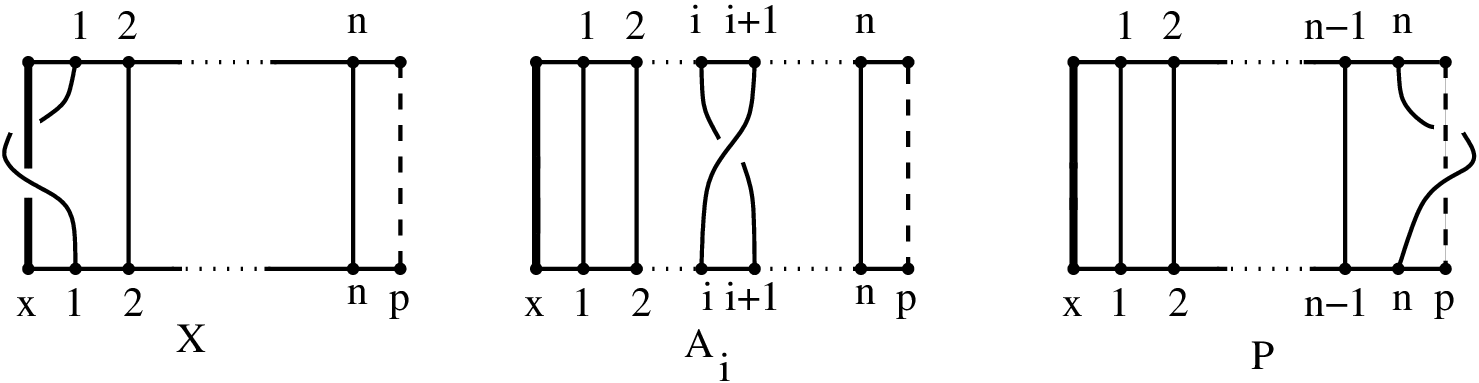}}

\centerline{Figure 5: Generators of $\pi_1^{orb}(B_n({\Bbb C}(1,1;q)))$.}

\medskip

The relations are $$X^q=1, XA_1XA_1=A_1XA_1X, A_iA_{i+1}A_i=A_{i+1}A_iA_{i+1},$$$$
  A_iA_j=A_jA_i,\ \text{for}\ |i-j|>1\ \text{and}\ PA_{n-1}PA_{n-1}=A_{n-1}PA_{n-1}P.$$

And the group $\pi_1^{orb}(B_{n+1}({\Bbb C}(0,1;q)))$ has the following
generators.

$$\overline X,\overline A_1,\cdots, \overline A_{n-1},  \overline A_n.$$
\medskip

\centerline{\includegraphics[height=11cm,width=11cm,keepaspectratio]{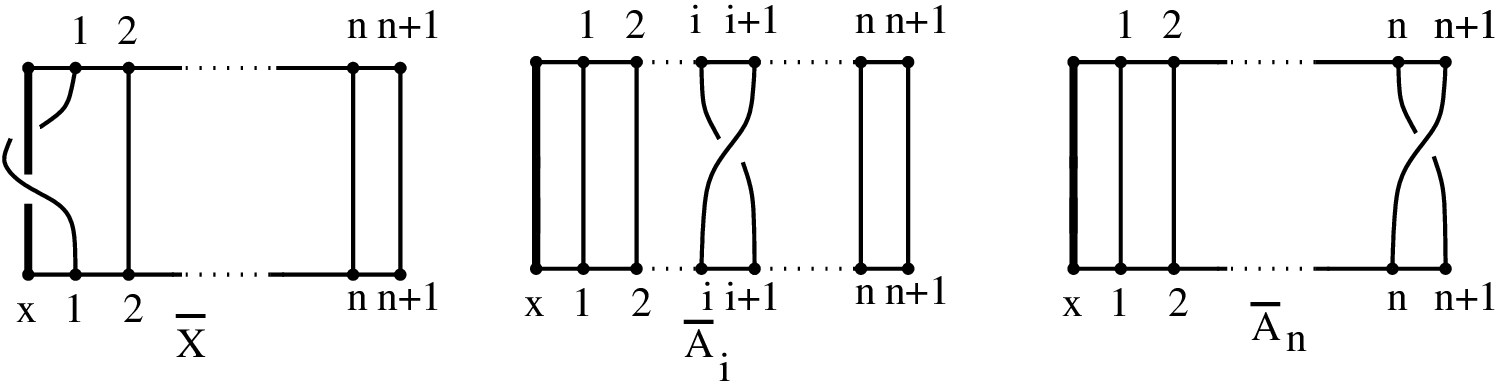}}

\centerline{Figure 6: Generators of $\pi_1^{orb}(B_{n+1}({\Bbb C}(0,1;q)))$.}

\medskip

The relations are 
$$\overline X^q=1,\ \overline X\ \overline A_1\overline X\ \overline A_1=
\overline A_1\overline X\ \overline A_1\overline X,\ \overline A_i\overline A_{i+1}\overline A_i
=\overline A_{i+1}\overline A_i\overline A_{i+1}$$
$$\ \text{and}\ \overline A_i\overline A_j=\overline A_j\overline A_i,\ \text{for}\ |i-j|>1.$$

See \cite{All} for more on this matter.

It is easy to see that the map defined above is obtained by the following
map on the generators level.
$$X\mapsto \overline X, A_i\mapsto \overline A_i\ \text{for}\
i=1,2,\ldots, n-1\ \text{and}\ P\mapsto \overline A_n^2.$$
This completes the proof of the proposition.
\end{proof}

\section{Proofs of Theorems \ref{orbi-poly-free} and \ref{d-b}}
In this section we give the proofs of Theorems \ref{orbi-poly-free} and \ref{d-b}.
Throughout this section we refer to the conditions in Definition \ref{C} as Condition $(n)$.
We need the following lemma for some induction argument.

\begin{lemma}\label{induction}
Let $f:M\to N$ be a locally trivial fibration of connected 
manifolds with fiber a $2$-manifold, and assume that $\pi_2(N)=1$. If $\pi_1(N)\in {\cal C}$, 
then $\pi_1(M)\in {\cal C}$.\end{lemma}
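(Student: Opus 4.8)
The plan is to exploit the long exact homotopy sequence of the fibration together with Condition $(5)$ of Definition \ref{C}. Write the fibration as $F\to M\xrightarrow{f} N$ with $F$ a $2$-manifold. Since $\pi_2(N)=1$, the homotopy exact sequence gives a short exact sequence
\[
1\to \pi_1(F)\to \pi_1(M)\to \pi_1(N)\to 1,
\]
using that $\pi_2(N)=1$ forces the map $\pi_2(M)\to\pi_2(N)$ to have the connecting map $\pi_2(N)\to\pi_1(F)$ trivial, so $\pi_1(F)$ injects. (One should be mildly careful here: if $F$ is a closed surface other than $S^2$ or $\mathbb{RP}^2$ this is automatic; if $F=S^2$ then $\pi_1(F)=1$ and the statement is trivial; if $F$ is noncompact then $\pi_1(F)$ is free and $\pi_2(F)=1$ anyway. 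In all cases $\pi_1(F)$ is a $2$-manifold group, hence lies in $\cal C$ by Condition $(1)$, being $\pi_1$ of a connected manifold of dimension $\leq 3$.)

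Next I would verify the three hypotheses of Condition $(5)$ for this extension, with $K=\pi_1(F)$, $G=\pi_1(M)$, $H=\pi_1(N)$ and $p=f_*$. We have $K\in\cal C$ as just noted, and $H=\pi_1(N)\in\cal C$ by assumption. The remaining point is to check that $p^{-1}(C)\in\cal C$ for every infinite cyclic subgroup $C\leq H$. The key observation is that $p^{-1}(C)$ is the fundamental group of the covering space $M_C\to M$ corresponding to $(f_*)^{-1}(C)$, and this $M_C$ is itself the total space of a locally trivial fibration $F\to M_C\to N_C$, where $N_C\to N$ is the connected cover of $N$ with $\pi_1(N_C)=C\cong\mathbb Z$. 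Thus $M_C$ is a connected manifold fibering over a space with free (indeed cyclic) fundamental group and $2$-manifold fibre.

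Then I would argue that $\pi_1(M_C)$ lies in $\cal C$ directly: $N_C$ is homotopy equivalent to a circle (it is an aspherical-or-not open manifold with $\pi_1=\mathbb Z$; in any event $\pi_2(N_C)=\pi_2(N)=1$ since covers induce isomorphisms on $\pi_{\geq 2}$), so pulling back the fibration over a generating circle $S^1\hookrightarrow N_C$ (or using that $N_C$ deformation retracts appropriately) realizes $\pi_1(M_C)$ as an extension $1\to\pi_1(F)\to\pi_1(M_C)\to\mathbb Z\to 1$, i.e. an ascending HNN-type / mapping-torus group over the surface group $\pi_1(F)$. Such a group is the fundamental group of a closed (or open) $3$-manifold — a surface bundle over the circle — and hence belongs to $\cal C$ by Condition $(1)$. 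Therefore Condition $(5)$ applies and $\pi_1(M)\in\cal C$.

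The main obstacle I anticipate is the $p^{-1}(C)$ clause: one must be sure that the pullback cover $M_C$ really is (homotopy equivalent to) the total space of a \emph{surface bundle over the circle}, so that Condition $(1)$ can be invoked — this requires knowing that $N_C$ is homotopy equivalent to $S^1$ and that the pulled-back fibration over that circle has the same $\pi_1$. If $N$ is not aspherical this needs a small argument (e.g. $N_C$ has the homotopy type of a CW complex with $\pi_1=\mathbb Z$ and $\pi_2=1$, and the $3$-manifold realizing the bundle group exists abstractly as a mapping torus of a homeomorphism of $F$ representing the monodromy of $C$). Once that is in hand, the three inputs of Condition $(5)$ are all verified and the conclusion follows immediately.
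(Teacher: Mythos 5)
Your proposal is correct and follows essentially the same route as the paper: both extract the short exact sequence $1\to\pi_1(F)\to\pi_1(M)\to\pi_1(N)\to 1$ from the fibration using $\pi_2(N)=1$, and both verify the $p^{-1}(C)$ clause of Condition $(5)$ by identifying $f_*^{-1}(C)$ with the fundamental group of the mapping torus of the monodromy of $F$ over a generating loop of $C$ (a surface bundle over the circle, hence a $3$-manifold group), the paper doing this via the pullback of $f$ along $\alpha:{\Bbb S}^1\to N$ and the five lemma, exactly the "small argument" you flag at the end.
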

\begin{proof} Let $F$ be the $2$-manifold fiber of the fibration over
  some base point.
  Then we have the following short exact sequence of groups
  coming from the long exact sequence of homotopy groups applied to
  the fibration $f$.

  \medskip
  \centerline{
    \xymatrix{1\ar[r]&\pi_1(F)\ar[r]&\pi_1(M)\ar[r]^{f_*}& \pi_1(N)\ar[r]& 1.}}
  \medskip
  
  By Conditions $(1)$ and $(2)$, $\pi_1(M)\in {\cal C}$ if $\pi_1(N)$ is finite. Therefore,
  we can assume that $\pi_1(N)$ is infinite. Let $C < \pi_1(N)$ be an infinite cyclic
  subgroup generated by $[\alpha]\in\pi_1(N)$. Let $f_{\alpha}$ be the monodromy homeomorphism of $F$, corresponding
  to $\alpha$. Then we get the following first diagram, as the mapping torus $M_{f_{\alpha}}$ of $f_{\alpha}$ is identified
  with the pullback $\{(s,m)\in {\Bbb S}^1\times M\ |\ \alpha(s)=f(m)\}$ of the fibration $f$, under $\alpha:{\Bbb S}^1\to N$.
  The second diagram is obtained by applying the long exact homotopy sequence and its naturality
  on the two fibrations $f$ and $M_{f_\alpha}\to {\Bbb S}^1$.

Note that $\pi_1(M_{f_{\alpha}})$ goes into $f_*^{-1}(C)$, which gives the third diagram.
    
\medskip
\centerline{
\xymatrix{
  &&1\ar[d]&1\ar[d]&1\ar[d]&1\ar[d]\\
  F\ar[r]^{id}\ar[d]&F\ar[d]&\pi_1(F)\ar[r]^{id}\ar[d]&\pi_1(F)\ar[d]&\pi_1(F)\ar[r]^{id}\ar[d]&\pi_1(F)\ar[d]\\
  M_{f_{\alpha}}\ar[r]\ar[d]&M\ar[d]^f&\pi_1(M_{f_{\alpha}})\ar[r]\ar[d]& \pi_1(M)\ar[d]^{f_*}&\pi_1(M_{f_{\alpha}})\ar[r]\ar[d]& f_*^{-1}(C)\ar[d]\\
  {\Bbb S}^1 \ar[r]^{\alpha}&N&\pi_1({\Bbb S}^1)\ar[r]^{\alpha_*}\ar[d] & \pi_1(N)\ar[d]&\pi_1({\Bbb S}^1)\ar[r]^{\alpha_*}\ar[d] & C\ar[d]\\
  &&1&1&1&1}}

\medskip
From the third commutative diagram, and applying the five lemma we get that $f_*^{-1}(C)$ is isomorphic to the
fundamental group of $M_{f_{\alpha}}$, which is a $3$-manifold. Hence, $f_*^{-1}(C)$ belongs
to $\cal C$ by Condition $(2)$. Using Condition $(5)$ we
  complete the proof of the lemma.
  \end{proof}

\begin{proof}[Proof of Theorem \ref{orbi-poly-free}] Recall that, $S$
  is an aspherical, connected $2$-manifold, and we have to show that
  the braid group of $S$ belong to $\cal C$.
  
  First, note that the second half of the theorem follows from Theorem
  \ref{AllC} and Condition $(2)$,
  once we prove that the braid group of $S$ belongs to $\cal C$.

  By Condition $(2)$, it is enough to prove that the
  pure braid group of $S$ belongs to $\cal C$.

  So, let $\G_n=\pi_1(PB_n(S))$ be the pure braid group of $S$. The proof
  is by induction on $n$.

\medskip
\noindent
{\bf Case $n=1$.} In this case $\G_1\simeq \pi_1(S)$, and hence the
proof is completed using Condition $(1)$.

\medskip
\noindent
{\bf Case $n\geq 2$.} Assume that we have proved the theorem for $\G_{n-1}$.
Consider the following projection to the first $n-1$ coordinates;
$PB_n(S)\to PB_{n-1}(S)$. This is a locally trivial fibration 
with fiber $S- \{(n-1)\ \text{points}\}$ (Theorem \ref{fn}). By the
induction hypothesis, $\pi_1(PB_{n-1}(S))\in \cal C$.
Therefore, by Lemma \ref{induction}, $\pi_1(PB_n(S))\in \cal C$.

This completes the proof of the Theorem.
\end{proof}

\begin{rem}\label{fth}{\rm The same proof also applies to show that the fundamental group
    of any fiber-type hyperplane arrangement (see [\cite{OT},
    Definition 5.11]) complement in
    ${\Bbb C}^n$ belongs to $\cal C$.}\end{rem}

\begin{proof}[Proof of Theorem \ref{d-b}] Except for the $\tilde B_n$ case, by Condition $(2)$, it is
  enough to prove that the pure Artin groups of the other types belong to $\cal C$. We will prove the 
  $\tilde B_n$ case at the end.

  First, we deduce the proof of the theorem in the $F_4$ and $D_n$ cases.
  
  The idea is to see the pure Artin spaces of types $F_4$ and $D_n$
  as the total space of a locally trivial fibration   
over another aspherical manifold, whose fundamental group belong to $\cal C$, and 
the fiber is a $2$-manifold. Then Lemma \ref{induction} will be applicable.

For the rest of the section we refer to the discussion in the
proof of [\cite{Bri}, Proposition 2].

We begin with the following lemma.

\begin{lemma} \label{fibration} Let $Z_n=\{(z_1, z_2,\ldots , 
z_n)\in {\Bbb C}^n\ |\ z_i\neq 0\ \text{for all}\ i, z_i\neq z_j\ \text{for}\ i\neq j\}$. 
Then $Z_n$ is aspherical and $\pi_1(Z_n)$ belongs to $\cal C$ for all $n$.\end{lemma}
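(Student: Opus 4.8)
The plan is to prove asphericity and membership in $\cal C$ simultaneously by induction on $n$, using the forgetful projection to the first $n-1$ coordinates. First I would observe that $Z_1={\Bbb C}^*$, which is aspherical (it is homotopy equivalent to $S^1$) and has $\pi_1(Z_1)\cong{\Bbb Z}$, which lies in $\cal C$ by Condition $(1)$ since $S^1$ is a $1$-manifold. For the inductive step, consider the map $Z_n\to Z_{n-1}$ sending $(z_1,\ldots,z_n)\mapsto(z_1,\ldots,z_{n-1})$. This is a locally trivial fibration: it is exactly the Fadell--Neuwirth-type fibration for the punctured plane, since $Z_n = PB_{n-1}({\Bbb C}^*) \times_{\;} \cdots$ — more precisely, $Z_n$ is the total space of a fibration over $Z_{n-1}$ whose fiber over $(z_1,\ldots,z_{n-1})$ is ${\Bbb C}\setminus\{0,z_1,\ldots,z_{n-1}\}$, i.e. the plane with $n$ points removed. (One can either invoke Theorem \ref{fn} applied to the open manifold $M={\Bbb C}^*$ together with the identification $Z_n\cong PB_{n-1}(M)$ up to the extra coordinate, or check local triviality directly as in \cite{FN}; I would cite \cite{FN} or \cite{OT} for the fiber-type arrangement structure here, noting that the braid arrangement together with the coordinate hyperplanes $z_i=0$ is a fiber-type (supersolvable) arrangement.)

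Next, the fiber ${\Bbb C}\setminus\{n\text{ points}\}$ is a connected, non-compact surface, hence aspherical with free fundamental group. Combined with the inductive hypothesis that $Z_{n-1}$ is aspherical, the long exact homotopy sequence of the fibration gives $\pi_i(Z_n)=0$ for $i\geq 2$ (the relevant terms $\pi_i(\text{fiber})$ and $\pi_i(Z_{n-1})$ both vanish, and $\pi_2(Z_{n-1})=1$ in particular), so $Z_n$ is aspherical. This also supplies the short exact sequence
$$1\to \pi_1({\Bbb C}\setminus\{n\text{ points}\})\to\pi_1(Z_n)\to\pi_1(Z_{n-1})\to 1.$$
Now I would apply Lemma \ref{induction} directly: $f:Z_n\to Z_{n-1}$ is a locally trivial fibration of connected manifolds with $2$-manifold fiber, $\pi_2(Z_{n-1})=1$, and by the inductive hypothesis $\pi_1(Z_{n-1})\in\cal C$; therefore $\pi_1(Z_n)\in\cal C$. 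This closes the induction.

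The only real point requiring care is the first step of the induction — identifying $Z_n\to Z_{n-1}$ as a genuine locally trivial fibration with the asserted surface fiber — since Lemma \ref{induction} is stated for fibrations of honest manifolds and one must be sure the coordinate-hyperplane condition $z_i\neq 0$ does not spoil local triviality. This is standard (it is the fiber-type/supersolvability of the arrangement $\{z_i=0\}\cup\{z_i=z_j\}$, cf. Remark \ref{fth}), so in the write-up I would either quote \cite{FN} for the punctured plane or simply invoke Remark \ref{fth} together with the observation that $Z_n$ is the complement of a fiber-type arrangement in ${\Bbb C}^n$. Everything else is a routine application of Lemma \ref{induction}.
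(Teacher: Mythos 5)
Your proof is correct and follows essentially the same route as the paper: the paper's proof simply notes $Z_n=PB_n({\Bbb C}^*)$, gets asphericity from the Fadell--Neuwirth fibrations, and cites the first half of Theorem \ref{orbi-poly-free} for membership in $\cal C$ --- whose proof is exactly the induction via Lemma \ref{induction} that you carry out explicitly. (Minor slip: the identification is $Z_n\cong PB_n({\Bbb C}^*)$, not $PB_{n-1}$, and with that in hand Theorem \ref{fn} gives local triviality directly, so no separate fiber-type/supersolvability argument is needed.)
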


\begin{proof} The proof that $Z_n$ is aspherical follows easily 
by an induction argument, on taking successive projections 
and using the long exact sequence 
of homotopy groups of a fibration. (See Theorem \ref{fn} and note that $Z_n=PB_n({\Bbb C}^*)$).

The proof that $\pi_1(Z_n)$ belongs to $\cal C$, follows from the first
half of Theorem \ref{orbi-poly-free}.
\end{proof}

\noindent
{\bf $F_4$ case.}

The pure Artin group of type $F_4$ is isomorphic to the 
fundamental group of the following hyperplane arrangement complement 
in ${\Bbb C}^4$.

$PA_{F_4}= \{(y_1,y_2,y_3,y_4)\in{\Bbb C}^4\ |\ y_i\neq 0\ 
\text{for all}\ i,\ y_i\pm y_j\neq 0\ \text{for}\ i\neq j,$ 

\hspace{1.6cm}$ y_1\pm y_2\pm y_3\pm y_4\neq 0\}.$

The signs above appear in arbitrary combinations.
Next consider the map 
$PA_{F_4}\to Z_3$ defined by the following formula.
$$z_i=y_1y_2y_3y_4(y_4^2-y_i^2), i=1,2,3.$$

This map is a locally trivial fibration, with fiber a $2$-manifold.
Hence, by Lemmas \ref{induction} and \ref{fibration}, $\pi_1 ({PA}_{F_4})$ belongs
to $\cal C$.

\noindent
{\bf $D_n$-case.}

Again, the reference for 
this discussion is [\cite{Bri}, Proposition 2].

This proof goes in the same line, as the proof of the 
$F_4$ case. First, note that 
the pure Artin space of type $D_n$ has the following form.
$$PA_{D_n}=\{(y_1,\ldots ,y_n)\in{\Bbb C}^n\ |\ y_i\pm y_j\neq 0\ \text{for}\ i\neq j\}.$$ 
Next the map $PA_{D_n}\to Z_{n-1}$ defined by $z_i=y_n^2-y_i^2$,
$i=1,2,\cdots, n-1$, is a
locally trivial fibration with fiber a $2$-manifold. 
The proof now follows from Lemmas \ref{induction} and \ref{fibration}.

\medskip
\noindent
{\bf $I_2(p)$-case.}

First, note that this is a 
rank $2$ case. Therefore, the pure Artin group of type $I_2(p)$ belongs to
$\cal C$, by the following more general statement.

\begin{lemma} The fundamental group of any hyperplane arrangement complement 
in ${\Bbb C}^2$ belongs to $\cal C$.\end{lemma}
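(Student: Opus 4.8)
The statement is that the fundamental group of any hyperplane arrangement complement in $\Bbb C^2$ belongs to $\cal C$. I would prove this by reducing to the structure of line arrangements in $\Bbb C^2$ and exploiting the fibration/colimit machinery already set up in Definition \ref{C}. First I would observe that a hyperplane in $\Bbb C^2$ is a complex line, so we are looking at the complement $M = \Bbb C^2 - \bigcup_{i=1}^k \ell_i$ of finitely many lines. The key reduction is the \emph{Brieskorn–Hattori observation} (sometimes phrased via the projectivization): by a generic linear change of coordinates one may assume no $\ell_i$ is vertical, and then the projection $p : \Bbb C^2 \to \Bbb C$ onto the first coordinate, restricted to $M$, is a locally trivial fibration over $\Bbb C$ minus finitely many points (the images of the intersection points of the $\ell_i$), with fiber $\Bbb C$ minus finitely many points. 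Both base and fiber are aspherical $2$-manifolds, and the base is a $1$-dimensional complex manifold (real dimension $2$), so $\pi_2$ of the base vanishes. Hence Lemma \ref{induction} applies directly: $\pi_1$ of the base belongs to $\cal C$ by Condition $(1)$ (it is the fundamental group of a $2$-manifold), and therefore $\pi_1(M) \in \cal C$.

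There is a subtlety: the arrangement might consist of lines all passing through a single point (a \emph{central} arrangement, or a "pencil"), in which case the generic-coordinates trick still produces a fibration, but the base is then $\Bbb C^*$ (or $\Bbb C$ minus one point) and the fiber is $\Bbb C$ minus $k-1$ points — still fine. If the arrangement is empty, $M = \Bbb C^2$ and $\pi_1 = 1 \in \cal C$ trivially; if it has one line, $M \simeq \Bbb C^* \times \Bbb C$, again handled by Condition $(1)$. So in all cases the single application of Lemma \ref{induction} suffices, after checking that the projection is genuinely a locally trivial fibration. That last point is the routine-but-essential verification: away from the finitely many "bad" values $t \in \Bbb C$ where a line is vertical-after-shift or where two lines meet, the fiber $p^{-1}(t) \cap M$ is $\Bbb C$ minus a fixed number of points, and local triviality follows because the deleted points move continuously (indeed holomorphically) and one can build local product charts — this is exactly the $2$-dimensional base case of the Fadell–Neuwirth-type argument, and is standard for line arrangements.

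The main obstacle I anticipate is not conceptual but bookkeeping: making the "generic change of coordinates" precise so that the projection is simultaneously transverse to every line and separates all intersection points, and then arguing local triviality cleanly rather than hand-waving it. One clean way to sidestep some of this is to invoke the well-known fact that every line arrangement complement in $\Bbb C^2$ is a \emph{fiber-type} (supersolvable) arrangement complement — arrangements of rank $\leq 2$ are always fiber-type — and then appeal to Remark \ref{fth}, which already records that fundamental groups of fiber-type arrangement complements in $\Bbb C^n$ belong to $\cal C$. That reduces the whole lemma to citing the rank-$2$ fiber-type fact plus Remark \ref{fth}, with the fibration verification folded into the cited structure theory of fiber-type arrangements. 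I would present the direct fibration argument as the main line (it is self-contained and only needs Lemma \ref{induction}), and mention the fiber-type viewpoint as the reason the hypothesis of Lemma \ref{induction} — fiber a $2$-manifold, base with vanishing $\pi_2$ — is automatically met here.
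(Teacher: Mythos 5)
Your route is genuinely different from the paper's, and in the generality you claim it has a gap. The paper's proof is a one-liner: since the relevant arrangements are central (all lines pass through the origin), the union ${\cal H}$ of the lines is a cone, so radial retraction deformation retracts ${\Bbb C}^2-{\cal H}$ onto ${\Bbb S}^3-({\Bbb S}^3\cap{\cal H})$, the complement of a link in the $3$-sphere; this is a $3$-manifold, so Condition $(1)$ of Definition \ref{C} applies directly, with no fibration and no appeal to Lemma \ref{induction}. Your argument instead runs the arrangement through Lemma \ref{induction}, which is fine in the central case: choosing coordinates so that one line of the pencil is a fibre of the projection, the complement fibres over ${\Bbb C}^*$ with fibre ${\Bbb C}$ minus $k-1$ points, exactly as you say, and this covers the only case the paper actually uses (the $I_2(p)$ reflection arrangement is central).

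The gap is in the non-central (affine) case, which your write-up claims to handle. For an arrangement whose lines have more than one point of pairwise intersection, the generic projection restricted to the complement $M$ is not a locally trivial fibration: $M$ surjects onto all of ${\Bbb C}$, and over the image of each intersection point the fibre has strictly fewer punctures, so local triviality fails there; restricting to the preimage of the punctured base changes the space (you delete complex curves of real codimension two, which changes $\pi_1$). No repair via fibrations is possible in general: the complement of three lines in general position in ${\Bbb C}^2$ is homotopy equivalent to the $2$-skeleton of the $3$-torus, hence not aspherical, so it cannot be the total space of a fibration whose base and fibre are aspherical surfaces. Likewise your fallback fact, ``arrangements of rank $\leq 2$ are fibre-type,'' is a statement about central arrangements; the cone over a generic affine $3$-line arrangement has rank $3$ and is not supersolvable. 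So either restrict the statement to central arrangements---where both your argument and the much shorter link-complement argument work---or supply a separate argument for the affine case.
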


\begin{proof}
  Let ${\cal H}$ be the union of the hyperplanes in ${\Bbb
      C}^2$. Consider the unit sphere ${\Bbb S}^3\subset {\Bbb
      C}^2$. Then ${\Bbb C}^2-{\cal H}$ deformation retracts to
    ${\Bbb S}^3-({\Bbb S}^3\cap {\cal H})$, which is a
    $3$-manifold. Hence, by Condition 1,
    $\pi_1({\Bbb C}^2-{\cal H})\in {\cal C}$.
\end{proof}

\medskip
\noindent
{\bf $G_2$-case.}

Since $G_2=I_2(6)$, the proof in this case is completed using the
previous case.

\medskip
\noindent
{\bf $G(de,e,r)$-case.}

For the proof of the theorem in the $G(de,e,r)$-case, we refer to Remark \ref{complex}.
In ([\cite{CLL}, Proposition 4.1]) it was shown that, for $d,r\geq 2$,  
${\cal A}_{G(de,e,r)}$ is a subgroup of ${\cal A}_{B_r}$. 
(In \cite{CLL}, these are called the braid groups associated to the reflection groups). 
Therefore, 
using Condition $(2)$ and the $B_n$ case of Theorem \ref{orbi-poly-free}, ${\cal A}_{G(de,e,r)}\in \cal C$.

\medskip
\noindent
{\bf $\tilde B_n$-case.}

By Theorem \ref{AllC}, the Artin group of type $\tilde B_n$ is
isomorphic to a subgroup of $\pi_1^{orb}(B_n({\Bbb C}(1,1;2)))$, and by Proposition \ref{mapping-braid},
$\pi_1^{orb}(B_n({\Bbb C}(1,1;2)))$ is isomorphic to a subgroup of $\pi_1^{orb}(B_{n+1}({\Bbb C}(0,1;2)))$.
On the other hand, again by Theorem \ref{AllC}, $\pi_1^{orb}(B_{n+1}({\Bbb C}(0,1;2)))$ contains
the Artin group of type $D_{n+1}$ as a subgroup of index $2$. Therefore, using Condition $(2)$ and the
above $D_n$-case, we
complete the proof. The flow of the argument is given in the following diagram.

\medskip
\centerline{
  \xymatrix{&&0\\
      &&{\Bbb Z}_2\ar[u]\\
      {\cal A}_{\tilde B_n}\ar@{^{(}->}[r]^{\hspace{-11mm} {\rotatebox{90}{\text{Theorem 4.1}}}}&
      \pi_1^{orb}(B_n({\Bbb C}(1,1;2)))\ar@{^{(}->}[r]^{\hspace{-1mm}{\rotatebox{90}{\text{Proposition 4.1}}}}&
      \pi_1^{orb}(B_{n+1}({\Bbb C}(0,1;2)))\ar[u]\\
  &&{\cal A}_{D_{n+1}}\ar@{^{(}->}[u]^{\text{Theorem 4.1}}}}
\end{proof}

\medskip
\section{Computation of the surgery groups}
This section is devoted to some application related to computation of the 
surgery groups of some of the discrete groups considered in this paper. 

Since the finite type pure Artin groups are torsion free, Corollary \ref{lkt} is 
applicable to the finite type pure Artin groups considered in Theorem \ref{fic-braid}. 
A parallel to this $K$-theoretic  
vanishing result is the computation of the surgery groups. 

For finite groups, the computation of the surgery groups is  
well established (\cite{HT}). The infinite groups case needs different 
techniques and is difficult, even 
when we have the isomorphism of the $L$-theory assembly map. For a 
survey on known results and techniques, on computation of surgery groups 
for infinite groups, see \cite{R1}.

In the case of classical pure braid group, we did the computation 
in \cite{R5}. Here we extend it to the pure Artin groups 
of the finite type Artin groups considered in Theorem \ref{fic-braid}.
 
The main idea behind 
the computation is the following lemma on the homotopy type of the first suspension of a 
hyperplane arrangement complement. This lemma was stated and proved in 
\cite{R5}.
 
\begin{lemma}\label{suspension} (\cite{R5}, Lemma 4.1) The first suspension $\Sigma (
{\Bbb C}^n-\cup_{j=1}^N A_j)$ of the complement of a   
hyperplane arrangement ${\cal A}=\{A_1,A_2,\ldots , A_N\}$ 
in ${\Bbb C}^n$, is 
homotopically equivalent to the wedge of spheres 
$\vee_{j=1}^N S_j$, where $S_j$ is 
homeomorphic to the $2$-sphere ${\Bbb S}^2$ for $j=
1,2, \ldots , N$.\end{lemma}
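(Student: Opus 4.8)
The plan is to read off the integral cohomology of $M:={\Bbb C}^n-\bigcup_{j=1}^N A_j$ from the Orlik--Solomon theorem, reduce the assertion to a cohomology-vanishing statement, and then recognise the suspension by a Hurewicz-and-Whitehead argument. Being an open subset of ${\Bbb C}^n$, the space $M$ has the homotopy type of a finite CW complex, and by the Orlik--Solomon theorem $H^*(M;{\Bbb Z})$ is free abelian in every degree, with $H^0(M)={\Bbb Z}$ and $H^1(M)\cong{\Bbb Z}^N$; a distinguished basis of $H^1(M)$ is given by the logarithmic classes $e_j=\tfrac{1}{2\pi i}\,[\,d\log\ell_j\,]$, where $\ell_j$ is a linear form cutting out $A_j$. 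In particular $\widetilde H^*(M;{\Bbb Z})$ is free abelian.

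First I would isolate the reduction: it suffices to prove that $\widetilde H^i(M;{\Bbb Z})=0$ for all $i\geq 2$. Granting this, $\Sigma M$ is a simply connected CW complex (the suspension of a connected space) whose reduced integral homology is free of rank $N$ and concentrated in degree $2$. By the Hurewicz theorem, $\pi_2(\Sigma M)\cong H_2(\Sigma M;{\Bbb Z})\cong{\Bbb Z}^N$; choosing maps $g_j\colon{\Bbb S}^2\to\Sigma M$ ($1\le j\le N$) whose homotopy classes form a basis and taking their wedge, one gets $\phi\colon\bigvee_{j=1}^N{\Bbb S}^2\to\Sigma M$ which induces an isomorphism on $H_2(-;{\Bbb Z})$ and hence on all reduced integral homology (both sides vanishing elsewhere). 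As source and target are simply connected CW complexes, Whitehead's theorem makes $\phi$ a homotopy equivalence, and with $S_j:=g_j({\Bbb S}^2)$ this yields the asserted wedge of $2$-spheres. The same step can be run through a minimal CW model $X\simeq M$ (one $i$-cell for each element of a basis of $H^i(M)$, all cellular differentials zero): then $X^{(1)}=\bigvee_N{\Bbb S}^1$, the attaching map of each $2$-cell is trivial in $H_1(X^{(1)})$, and since the suspension homomorphism $\pi_1(\bigvee_N{\Bbb S}^1)\to\pi_2(\bigvee_N{\Bbb S}^2)$ is abelianisation, each $2$-cell becomes attached along a null-homotopic map after one suspension; under the hypothesis $\widetilde H^{\ge 2}(M)=0$ there are no cells of higher dimension, so $\Sigma M\simeq\bigvee_N{\Bbb S}^2$.

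The main obstacle is exactly this reduction — the vanishing $\widetilde H^i(M;{\Bbb Z})=0$ for $i\ge 2$, equivalently that $M$ is homotopy equivalent to a wedge of $N$ circles, or that the Orlik--Solomon algebra of ${\cal A}$ vanishes above degree one. It is not a formal consequence of $\Sigma M$ being a co-$H$-space: that only forces the cup products in $\widetilde H^*(\Sigma M)$ to vanish, not the higher homology groups. To obtain it, I would fall back on the fibred structure of the arrangements in question, writing $M$ — by successive coordinate projections, as in the Fadell--Neuwirth fibration (Theorem \ref{fn}) and the reductions used in the proof of Theorem \ref{d-b} — as an iterated locally trivial fibration with punctured-plane fibres and homologically trivial monodromy, and then analysing this tower up to homotopy. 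Verifying that this structure forces the required vanishing, so that after a single suspension only the $N$ two-spheres detected by $H^1(M)$ survive, is the delicate and decisive step, and is where I would expect the argument of \cite{R5} to be concentrated.
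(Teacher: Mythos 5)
Your reduction is the right diagnostic, and that is precisely the problem: the statement you correctly single out as ``the delicate and decisive step'' --- the vanishing $\widetilde H^i({\Bbb C}^n-\cup_{j}A_j;{\Bbb Z})=0$ for $i\geq 2$ --- is \emph{false} for every arrangement in which some two hyperplanes meet, hence for all the arrangements relevant to this paper. Already for ${\cal A}=\{z_1=0,\ z_2=0\}$ in ${\Bbb C}^2$ the complement is ${\Bbb C}^*\times{\Bbb C}^*\simeq T^2$, so $\Sigma M\simeq S^2\vee S^2\vee S^3$ rather than $S^2\vee S^2$; in general the Orlik--Solomon algebra is nonzero in every degree up to the rank of the arrangement, with $b_i(M)=\sum|\mu(X)|$ summed over rank-$i$ flats. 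In particular the fibred structure you propose to fall back on works against you: for the braid arrangement one has $PB_3({\Bbb C})\simeq S^1\times(S^1\vee S^1)$, so $b_1=3=N$ but $b_2=2\neq 0$. No completion of your outline is therefore possible. The correct statement, due to Schaper \cite{S}, is that $\Sigma M$ is homotopy equivalent to a wedge of spheres containing $b_i(M)$ spheres of dimension $i+1$ for each $i\geq 1$; only the degree-one part accounts for the $N$ two-spheres appearing in the lemma.

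For what it is worth, the paper gives no proof beyond the citation of [\cite{R5}, Lemma 4.1], and the cited lemma is erroneous for exactly the reason your analysis exposes. Your conditional argument is sound as far as it goes: given the (false) vanishing, the Hurewicz--Whitehead step, and equally the minimal CW model with the observation that the suspension homomorphism $\pi_1(\vee_N S^1)\to\pi_2(\vee_N S^2)$ is abelianisation, would correctly identify $\Sigma M$ with $\vee_N S^2$; and your caution that the co-$H$-structure of a suspension only kills cup products, not higher Betti numbers, is precisely the point that a correct treatment must confront. Note the downstream effect: Proposition \ref{mainthm2} and Theorem \ref{surgery} must then be restated with contributions from all the Betti numbers of the arrangement complement, not just from $N$.
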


We need the following result to prove Theorem \ref{surgery}.

\begin{prop}\label{mainthm2}
Let ${\cal A}=\{A_1, A_2,\ldots , A_N\}$ be an   
aspherical hyperplane arrangement in ${\Bbb C}^n$. Assume that the two 
assembly maps $A_K$ and $A_L$ are isomorphisms, for the group 
$\G=\pi_1({\Bbb C}^n-\cup_{j=1}^{N}{A_j})$. Then the surgery groups of 
$\G$ are  
given by the following.

$$
L_i(\G) =\begin{cases} {\Bbb Z}&\text{if
$i\equiv$ 0 mod 4},\\
{\Bbb Z}^N &\text{if $i\equiv$ 1 mod 4},\\

{\Bbb Z}_2&\text{if $i\equiv$ 2 mod 4},\\

{\Bbb Z}_2^N &\text{if $i\equiv$ 3 mod 4}.
\end{cases}$$
\end{prop}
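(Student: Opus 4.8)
The plan is to combine the homotopy-theoretic input of Lemma \ref{suspension} with the two assembly-map isomorphisms exactly as in \cite{R5}. Write $X = {\Bbb C}^n-\cup_{j=1}^N A_j$, so $\G = \pi_1(X)$ and, since the arrangement is aspherical, $X$ is a $K(\G,1)$; hence $B\G \simeq X$ and $H_*(B\G;{\Bbb E}) \cong H_*(X;{\Bbb E})$ for any spectrum (or spectrum with coefficients) ${\Bbb E}$. By hypothesis the assembly maps $A_K$ and $A_L$ are isomorphisms, so $K_*({\Bbb Z}[\G]) \cong H_*(X;{\Bbb K}({\Bbb Z}))$ and $L_*({\Bbb Z}[\G]) \cong H_*(X;{\Bbb L}({\Bbb Z}))$. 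Thus the whole computation reduces to computing the generalized homology of the space $X$ with coefficients in the $L$-theory spectrum of the integers (the $K$-theory statement is only needed to justify, via Corollary \ref{lkt}, that all decorated $L$-groups agree, so that writing $L_i(\G)$ unambiguously makes sense — this is already the content of the Remark following Theorem \ref{surgery}).

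Next I would reduce the homology of $X$ to reduced homology of a wedge of $2$-spheres. Generalized homology is invariant under homotopy equivalence and satisfies the suspension isomorphism $\widetilde{H}_*(\Sigma Y;{\Bbb E}) \cong \widetilde{H}_{*-1}(Y;{\Bbb E})$. Therefore, using Lemma \ref{suspension},
$$\widetilde{H}_{i-1}(X;{\Bbb L}({\Bbb Z})) \cong \widetilde{H}_i\big(\Sigma X;{\Bbb L}({\Bbb Z})\big) \cong \widetilde{H}_i\Big(\bigvee_{j=1}^N {\Bbb S}^2;{\Bbb L}({\Bbb Z})\Big) \cong \bigoplus_{j=1}^N \widetilde{H}_i({\Bbb S}^2;{\Bbb L}({\Bbb Z})) \cong \big(L_{i-2}(\mathbb Z)\big)^N,$$
where the last step is the suspension isomorphism applied twice to ${\Bbb S}^2$, i.e. $\widetilde{H}_i({\Bbb S}^2;{\Bbb E}) \cong \pi_{i-2}({\Bbb E})$. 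Combined with the splitting $H_i(X;{\Bbb L}({\Bbb Z})) \cong \widetilde H_i(X;{\Bbb L}({\Bbb Z})) \oplus L_i(\mathbb Z)$ coming from the basepoint, this gives $L_i(\G) \cong L_i(\mathbb Z) \oplus \big(L_{i-3}(\mathbb Z)\big)^N$. Now substitute the classical surgery groups of the trivial group, $L_i(\mathbb Z) = {\Bbb Z}, 0, {\Bbb Z}_2, 0$ for $i \equiv 0,1,2,3 \bmod 4$ respectively (Wall); the index shift by $3$ sends the ${\Bbb Z}$ in degree $0$ to degree $1$ and the ${\Bbb Z}_2$ in degree $2$ to degree $3$, yielding exactly the four cases in the statement.

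The routine points to be careful about are: that the relevant homology theory here is the one whose coefficient spectrum has homotopy groups the $L$-groups of ${\Bbb Z}$ (so that $H_*(\mathrm{pt};{\Bbb L}({\Bbb Z})) = L_*({\Bbb Z})$), and that passing to the coefficient version of $FICwF$ does not change this computation since we only use the plain assembly map for $\G$ itself; and that the Atiyah--Hirzebruch spectral sequence for $\bigvee {\Bbb S}^2$ degenerates, which is automatic because reduced homology of a wedge of $2$-spheres is concentrated in a single cellular degree, so there are no extension or differential issues. I do not expect a genuine obstacle: the only mild subtlety is bookkeeping the degree shift (one suspension from Lemma \ref{suspension} plus the two-fold suspension inherent in ${\Bbb S}^2$), and checking that Lemma \ref{suspension} applies, which requires only that the arrangement lives in some ${\Bbb C}^n$ — true by hypothesis. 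Finally, Theorem \ref{surgery} follows by specializing $n$ and $N$ to each finite-type pure Artin group, reading off $N$ (the number of reflecting hyperplanes) from the classification as tabulated, the asphericity of the Artin spaces being \cite{Del} and the isomorphism of $A_K, A_L$ being Theorem \ref{fic-braid}.
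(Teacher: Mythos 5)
Your argument is essentially the paper's own proof: identify $B\G$ with the aspherical complement $X$, use the assembly-map isomorphism to reduce to $H_*(X;{\Bbb L}({\Bbb Z}))$, and compute that via Lemma \ref{suspension} and the suspension isomorphism, finishing with the surgery groups of the trivial group. One bookkeeping slip: your displayed chain gives $\widetilde H_i(X;{\Bbb L}({\Bbb Z}))\cong \bigl(L_{i-1}({\Bbb Z})\bigr)^N$, so the splitting should read $L_i(\G)\cong L_i({\Bbb Z})\oplus \bigl(L_{i-1}({\Bbb Z})\bigr)^N$ rather than involving $L_{i-3}({\Bbb Z})$ (which is $L_{i+1}({\Bbb Z})$ by periodicity); your verbal description of the degree shift and the resulting table are nevertheless correct.
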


\begin{proof} 
  The proposition was stated in [\cite{R5}, Theorem 2.2] for
  fiber-type hyperplane arrangement complements. But the proof of this
  more general statement is the same.
  First, by Lemma \ref{suspension},  
the first suspension of ${\Bbb C}^n-\cup_{j=1}^{N}{A_j}=X$ (say) is homotopically equivalent to 
the wedge of $N$ many $2$-spheres. Hence, for any generalized homology theory 
$h_*$, applied over $X$, we get the following equation. 
$$h_i(X)=h_i(*)\oplus h_{i-2}(*)^N.$$ 

Where $*$ denotes a single point space. The proof is  
completed using the known computation of the surgery 
groups of the trivial group, and since $A_L$ is an isomorphism.\end{proof}

\begin{proof}[Proof of Theorem \ref{surgery}] Note that, the pure Artin groups of 
the finite type Artin groups are torsion free, since they are fundamental 
groups of finite dimensional aspherical hyperplane arrangement complements 
in some complex space. See Remark \ref{torsion-free}. Hence, by
Theorem \ref{fic-braid}  
the assembly map $A_L$ is an isomorphism for these hyperplane arrangement complements. 

\medskip
\begin{tabular}{|l|l|l|}
\hline
${\cal {PA}}=$&&\\
{\bf pure Artin} &{\bf Reflecting hyperplanes  in} &{\bf Number of}\\
{\bf group type}&${\Bbb C}^n=\{(z_1,z_2,\ldots ,z_n)\ |\ z_i\in {\Bbb C}\}$&{\bf reflecting}\\
&&{\bf hyperplanes}\\
  \hline \hline
  $A_{n-1}$&$z_i=z_j$ for $i\neq j$.&$\frac{n(n-1)}{2}$\\
\hline
$B_n(=C_n)$ & $z_i=0$ for all $i$; $z_i=z_j$, $z_i=-z_j$ for $i\neq j$.&$n^2$\\
\hline
$D_n$&$z_i=z_j$, $z_i=-z_j$ for $i\neq j$.&$n(n-1)$\\ 
\hline
$F_4$&$n=4$; $z_i=0$ for all $i$; $z_i=z_j$, $z_i=-z_j$&$24$\\ 
&for $i\neq j$; $z_1\pm z_2\pm z_3\pm z_4=0$.  &\\
\hline
$I_2(p)$&$n=2$; roots are in one-to-one  &$p$\\
&correspondence with the lines&\\
&of symmetries of a regular $p$-gon in ${\Bbb R}^2$.&\\
& See [\cite{Hum}, page 4]. Hence $p$ hyperplanes.&\\
\hline
$G_2=I_2(6)$& &$6$\\
\hline
\end{tabular}

\medskip

\centerline{Table 5}

Therefore, by Proposition \ref{mainthm2} together 
with the calculation in Table 5 of the number of hyperplanes associated 
to the finite reflection groups, we complete the proof of the corollary.
We refer the reader to \cite{Bri}, for the 
equations of the hyperplanes given in the table. Or 
see [\cite{Hum}, p. 41-43] for a complete root structure.
 
\end{proof}

We end with the following remark and a problem.

\begin{rem}\label{asphericity}{\rm The asphericity  
    of certain hyperplane arrangement complements, including the configuration space of aspherical
    $2$-manifolds, was needed in this work,
    to conclude that they have torsion free
    fundamental groups, and also for some induction argument to work
    (see Lemma \ref{induction}). This result is well 
    known for configuration spaces of aspherical
    $2$-manifolds (see Theorem \ref{fn} and \cite{Bri}). It is known that the 
finite type real Artin groups are fundamental groups 
of aspherical manifolds (\cite{Del}). In the affine type 
Artin group case, this is proved recently in \cite{PS}.

For the configuration space of
    aspherical $2$-dimensional good orbifold, we state the following Problem.}\end{rem}

\medskip
\noindent
{\bf Problem.} Show that the orbifold braid group of any $2$-dimensional good orbifold, whose
universal cover is contractible, belongs to $\cal C$ or to the class
of groups defined in [\cite{lueck}, Theorem 20.8.6]. In particular, this will prove that the
FICwF is true for the affine Artin group of type $\tilde D_n$. See Theorem \ref{AllC}. Note that,
the pure braid space of such a $2$-dimensional good orbifold is again a good orbifold. But it is
not yet known if the universal cover of this pure braid space is contractible. Interesting
simple case to prove this will be ${\Bbb C}(0,2;2,2)$.

\newpage
\bibliographystyle{plain}
\ifx\undefined\bysame
\newcommand{\bysame}{\leavevmode\hbox to3em{\hrulefill}\,}
\fi

\end{document}